\newtheorem{prop}{Proposition}[section]
\newtheorem{lemma}{Lemma}[section]
\newtheorem{thm}{Theorem}[section]
\newtheorem{cor}{Corollary}[section]
\newtheorem{rem}{Remark}[section]
\newtheorem{conj}{Conjecture}[section]
\newtheorem{defi}{Definition}[section]
\newcommand{\w}{\varpi}
\renewcommand{\l}{\lambda}
\newcommand{\Tr}{\mathrm{Tr}}
\renewcommand{\det}{\mathrm{det}}
\newcommand{\diag}{\mathrm{diag}}
\newcommand{\e}{\epsilon}
\newcommand{\C}{\mathbb{C}}
\renewcommand{\d}{\delta}
\newcommand{\D}{\Delta}
\renewcommand{\P}{\mathcal{P}}
\newcommand{\Hom}{\mathrm{Hom}}
\newcommand{\GL}{\mathrm{GL}}
\newcommand{\Gal}{\mathrm{Gal}}
\newcommand{\Ind}{\mathrm{Ind}}
\newcommand{\M}{\mathcal{M}}
\newcommand{\K}{\mathcal{K}}
\renewcommand{\O}{\mathcal{O}}
\title{The split case of the Prasad--Takloo-Bighash conjecture for cuspidal representations of level zero}
\author{M. Chommaux and N. Matringe}
\begin{document}

\maketitle

\begin{abstract}
Let $E/F$ be a quadratic extension of non archimedean local fields of odd residual characteristic. We prove a conjecture 
of Prasad and Takloo-Bighash, in the case of cuspidal representations of depth zero of $\GL_{2m}(F)$. This conjecture characterizes distinction for the pair $(\GL_{2m}(F),\GL_m(E))$ with respect to a character $\mu\circ \det$ of $\GL_m(E)$, in terms of certain conditions on Langlands paremeters, including an epsilon value. We also compute the multiplicity of the involved equivariant linear forms when $E/F$ is unramified, and also when $\mu$ is tame. In both cases this multiplicity is at most one.
\end{abstract}

\section*{Introduction}

Let $E/F$ be a quadratic extension of non archimedean local fields. Let $D$ be an $F$-division algebra of dimension $d^2$ and $n$ be a positive integer such that $nd$ is even. Set $\M=\M(n,D)$, so that $E$ embeds into $M$ uniquely up to inner automorphism. Set 
$C_E(\M)$ to be the centralizer of $E$ in $\M$, it is an $E$-central simple algebra. Let $G=\M^\times$ and $H=C_E(\M)^\times$, for $\mu:E^*\rightarrow \C^*$ a smooth character, we denote by $\mu$ of the
character $H$ 
obtained by composing $\mu$ with the reduced norm on $H$. This paper is concerned with the following conjecture of Prasad and Takloo--Bighash \cite[Conjecture 1]{PTB} 
(the generic transfer assumption in [ibid.] has been shown to be unnecessary in \cite{Suz.20}): 

\begin{conj}\label{conj}\begin{sloppypar}
Let $\pi$ be an irreducible admissible representation of $G=\GL(n,D)$ with central 
character $\omega_\pi$. Let $\mu$ be a character of  
$E^\times$ such that $\mu^{\frac{nd}{2}}|_{F^\times}=\omega_\pi$. If the representation $\pi$ is $\mu$-distinguished by $H$, i.e. if $\Hom_H(\pi,\mu)\neq 0$, then:
\end{sloppypar}
\begin{enumerate}
\item the Langlands parameter $\phi(\pi)$ of $\pi$ takes values in $GSP_{nd}(\mathbb{C})$, with similitude factor $\mu_{|F^\times}$;
\item the epsilon factor satisfies the relation 
\[\epsilon(\frac{1}{2},\phi(\pi)\otimes \Ind_{W_E}^{W_F}(\mu^{-1}))=(-1)^n\omega_{E/F}(-1)^{\frac{nd}{2}}\mu(-1)^{\frac{nd}{2}}\] 
where $\omega_{E/F}$ is the quadratic character of $F^\times$ with kernel the norms of $E^\times$, and $W$ stands for the Weil group.
\end{enumerate}
If $\pi$ is a disrete series representation of $G$, then the implication becomes an equivalence.
\end{conj}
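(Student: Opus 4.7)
The plan is to prove the two directions of Conjecture~\ref{conj} separately, reducing to the split form $\GL_{nd}(F)$ whenever possible, and then exploiting the internal structure of the discrete series for the converse. For the forward implication, a natural first step is the Jacquet--Langlands correspondence: if $\pi$ on $\GL(n,D)$ is $\mu$-distinguished by $H$, one expects its JL transfer $\pi^{\mathrm{JL}}$ on $\GL_{nd}(F)$ to remain $\mu$-distinguished by $\GL_{nd/2}(E)$, so that both conditions (1) and (2) can be verified on the split side where Langlands parameters are better controlled. Transferring distinction itself is not automatic and would need either a relative trace formula identity or a Bernstein-component analysis matching the two categories of smooth representations.

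On the split group, I would attack condition~(1) by exhibiting $\pi$ as a theta lift, or more generally as an image under a functorial transfer, from a symplectic similitude group $GSp_{nd}(F)$; the $H$-distinction is closely related to a Shalika-type period whose non-vanishing typically forces a symplectic self-duality of the Langlands parameter with the prescribed similitude factor $\mu|_{F^\times}$. Condition~(2) would then follow from the local functional equations for the exterior-square and Asai $L$-functions, together with Deligne--Henniart formulas for $\epsilon$-factors of induced representations; the careful tracking of the sign $(-1)^n\omega_{E/F}(-1)^{nd/2}\mu(-1)^{nd/2}$ is delicate but essentially combinatorial once the normalization of local constants is fixed.

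For the converse in the discrete series case, I would use a global argument: realize $\pi$ as the local component at a place $v_0$ of a cuspidal automorphic representation $\Pi$ of $G(\mathbb{A})$ with controlled ramification at the other places, so that the global Langlands parameter factors through $GSp$ and satisfies a global $\epsilon$-identity forcing non-vanishing of a specific central $L$-value. A Jacquet--Rallis type period identity should then translate this into non-vanishing of the global $H$-period against $\mu$, and the local period at $v_0$ can then be extracted by factorization, using density of matrix coefficients to arrange non-vanishing of all other local factors.

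The main obstacle is condition~(1) in the forward direction for inner forms ($D\neq F$), where the standard explicit models (Whittaker, Shalika, Bessel) are unavailable, so one must rely either on a robust JL compatibility for distinction or on a direct functorial transfer argument producing a symplectic structure on $\phi(\pi)$. Globalizing an inner-form representation with prescribed local behaviour at a specified set of places is a second genuine difficulty, and matching the $\epsilon$-factor signs across the equivalence requires painstaking book-keeping of Langlands and Tate--Deligne normalizations throughout.
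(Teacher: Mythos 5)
Your proposal attempts the full Prasad--Takloo-Bighash conjecture, but as written it is a research program rather than a proof, and the gaps you flag are precisely the hard points. Concretely: (i) transferring $\mu$-distinction through Jacquet--Langlands is not known in this generality, and you offer no relative trace formula identity or category-matching argument to substitute for it; (ii) the claim that $H$-distinction ``forces'' a symplectic parameter with similitude factor $\mu_{|F^\times}$ via theta lifts or Shalika-type periods is exactly the content of condition (1) and is asserted, not established; (iii) the converse for discrete series requires globalizing with prescribed local behaviour, a Jacquet--Rallis type local-global period identity for the pair $(\GL_{nd},\GL_{nd/2}(E))$ twisted by $\mu$, and non-vanishing of a central $L$-value, none of which you prove or cite in a form that applies. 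Note also that the paper itself does not prove the conjecture in the generality you aim at: it proves only the case of depth-zero cuspidal representations of the split group $\GL_{2m}(F)$ in odd residual characteristic, so even a correct execution of your plan would be answering a different (much larger) question than the one the paper settles.

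The paper's actual argument is entirely local and explicit. It uses the type-theoretic parametrization $\pi(\chi)=c\text{-}\Ind_{F^*\GL_n(\O_F)}^{G}\lambda_\chi$ by tame admissible pairs, computes $\Hom_H(\pi(\chi),\mu)$ by Mackey theory over $H\backslash G/\K$ identified with vertices of the Bruhat--Tits building, eliminates non-$\theta$-fixed vertices by a Hakim--Murnaghan argument when $\mu$ is tame, and for non-tame $\mu$ reduces the surviving double cosets (explicitly parametrized via Onn's coset decomposition in the unramified case) to twisted Shalika models over the residue field, handled by Prasad's finite-field theorem. A globalization argument appears only for the soft statement that a $\mu$-distinguished cuspidal is $\mu$-selfdual. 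On the Galois side, $\mu_{|F^*}$-symplecticity of $\Ind_{W_L}^{W_F}(\eta\chi)$ is characterized by $\chi_{|L_0^*}=\mu_{|F^*}\circ N_{L_0/F}$ via an elementary determinant criterion and an induction lemma, and the epsilon value is computed directly from inductivity of local constants, the unramified Langlands $\lambda$-constants, and the Fr\"ohlich--Queyrut theorem, split into the $E/F$ unramified and ramified cases. None of the machinery you invoke (JL transfer of periods, theta correspondence, relative trace formula, global period identities) occurs in the paper, and none of the explicit local computations that actually carry the proof occur in your proposal; so there is a genuine gap, both in completeness and in scope.
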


This conjecture is inspired by earlier results of J. Tunnel and H. Saito for $n=2$ and $D=F$. In fact Tunnel was the first to consider the problem for $\GL(2,F)$, and 
to solve it when the residual characteristic of $F$ is not $2$ (\cite[Theorem p.1277]{T.83}), then Saito found a simpler proof valid in characteristic
different from $2$ 
(\cite[Theorem p.99]{S.93}). 

The current status of the conjecture is the following: when $\mu=1$ and $F$ has characteristic zero and odd residual characteristic, the conjecture 
should be proved by the combination of 
\cite{X.20}, \cite{Sec.20}, \cite{Suz.20} and \cite{SX.20}. The paper \cite{X.20} holds in characteristic zero, the paper \cite{Suz.20} 
holds in characteristic not $2$, and the parts of \cite{Sec.20} which do not 
depend on \cite{X.20} hold in residual characteristic not $2$. Finally the reduction from discrete series to cuspidal representations done in 
\cite{SX.20} holds in characteristic zero (and assumes odd residual characteristic because the main theorem depends on \cite{Sec.20}).

For general $\mu$ and $F$ of characteristic not $2$, the conjecture is proved by the first named author in \cite{C.19} for Steinberg representations. In this paper, 
when the residual characteristic of $F$ is not $2$, we prove it for general $\mu$ and depth-zero cuspidal representations of $F$-split $G$.\\

Let us describe the how the paper is organized: we assume the residual characteristic of $F$ to be different from $2$, and suppose that $n\geq 4$ as in any case the conjecture we intend to prove is known for $n=2$ from Tunnel and Saito's results.\\

In Section \ref{section distinction when mu is tame} we treat the case where $\mu$ is tame. By standard Mackey theory arguments, and an also standard argument of Hakim and Murnaghan, we characterize $\mu$-distinction of depth-zero cuspidal representations in terms of their Langlands parameters (Theorem \ref{theorem distinction when mu is tame}).\\ 

In Section \ref{section mu distinguished implies mu selfdual}, in order to characterize distinction when $\mu$ is not tame, we prove in 
Proposition \ref{proposition selfdual cuspidal representations} that a 
$\mu$-distinguished cuspidal representation of any inner form of $\GL_n(F)$ is $\mu$-selfdual, by a standard globalization argument.\\

In Section \ref{section mu distinction} we extend in Theorem \ref{theorem characterization of distinction} our characterization of $\mu$-distinction depth-zero cuspidal representations of $\GL_n(F)$ in terms of their Langlands parameter to any character $\mu$. Along the way we isolate the contribution of residual twisted Shalika models in Proposition \ref{propositon example of non trivial double coset contribution}, and show in Proposition \ref{proposition multiplicity one unramified} that when $E/F$ is unramified, the only double coset contributing to distinction is the one isolated in Proposition \ref{propositon example of non trivial double coset contribution}. In particular this gives a multiplicity at most one statement when $E/F$ is unramified.\\

In Section \ref{section mu simplecticity} we give an explicit characterization of $\mu$-simplecticity of depth-zero cuspidal representations of $\GL_n(F)$ (see 
Corollary \ref{corollary mu-symplecticity}), which 
resembles (and in fact is implied by) our $\mu$-distinction criterion. \\

Finally in Section \ref{section PTB} we prove the Prasad and Takloo-Bighash conjecture for depth-zero cuspidal representations of $\GL_n(F)$ (Corollary \ref{corollary PTB}). With all the analysis done before, it reduces to a pleasant computation of the epsilon value of the conjecture for $\mu$-symplectic depth-zero cuspidal representations of $\GL_n(F)$ (with an extra condition on the central character) which is done in particular thanks to a result of Fröhlich and Queyrut (\cite{FQ.73}). The computation in question is performed in Theorem \ref{theorem PTB epsilon value}.\\

\noindent \textbf{Acknowledgements.} We thank P. Broussous, D. Prasad and V. Sécherre  for very useful conversations and comments.
We also thank P. Broussous for his contribution to some parts of the paper, and D. Prasad and V. Sécherre for notifying mistakes in previous versions
of the paper. The second named author thanks the Abdus Salam School of Mathematical Sciences (Lahore) where parts of this paper were written for 
its hospitality. We finally thank the referee for their accurate comments and suggestions, in particular allowing many simplifications in Section \ref{section mu simplecticity}. 

\section{Preliminary results}

\subsection{Notation / definitions}\label{not_cusp}~

Let $F$ be a non-archimedean local field of residual characteristic not $2$ and $D$ an $F$-central division algebra of dimension $d^2$ over $F$. We fix an algebraic closure which will contain all finite extensions of $F$ under consideration, and similarly for the residual field $k_F$ of $F$. For a finite  extension $\bullet$ of $F$, we 
denote by the $\mathcal{O}_\bullet$, $\mathcal{P}_\bullet$, $\varpi_\bullet$, $k_\bullet$ and $q_\bullet$ the ring of integers, its maximal ideal, a fixed uniformizer, the residual field of $\bullet$. Whenever $\chi:\bullet^*\rightarrow \C^*$ is a (smooth) character, we say that it is tame if $\mu(1+\P_\bullet)=\{1\}$.
Let $E$ be a quadratic extension of $F$ (we write $E=F[\delta]$ for a fixed $\delta$ in $E\setminus F$ such that $\delta^2$ is in $F$ and we set $\Delta=\delta^2$). We let $e(E/F)$ denote the ramification index of $E/F$. When $E/F$ is ramified, we choose $\varpi_E$ and $\varpi_F$ such that $\varpi_F=\varpi_E^2$; when 
$E/F$ is unramified, we choose $\varpi_F=\varpi_E$. 

Throughout the paper we will have \[nd=2m\] for $m$ a natural number. In fact except in Proposition \ref{proposition selfdual cuspidal representations}, we will have 
\[D=F\Leftrightarrow d=1 \Leftrightarrow n=2m.\] We will consider the group \[G=\GL_n(F)\] and its subgroup \[H\simeq \GL_m(E)\] embedded in $G$ as we now explain. Let $(e_1,\dots,e_m)$ be the canonical basis of $E^m$. Then $E^m$ identifies to $F^n$ as $F$-vector space via 
the basis $\mathcal{B}=(\d e_1,\dots,\d e_m, e_1,\dots,e_m)$. Now $H$ embeds in $G$ as the fixed points of 
$G$ under the involution 
\[\begin{matrix}\theta&:&G&\longrightarrow&G\\
   &&g&\mapsto& AgA^{-1}
\end{matrix} \hspace{1cm}\text{ where }\:A=\begin{pmatrix} & I_m \\ \D I_m &  \end{pmatrix}.\]

We denote by $\det_E$ the determinant map on $H$ identified with $\GL_m(E)$, with values in $E^*$. Hence any character 
$\mu$ of $E^*$ defines a character which we still write $\mu$ of $H$, and in fact all characters of $H$ are such.

\subsection{Parametrization of depth-zero cuspidal representations}

We call a depth-zero cuspidal representation of $\GL_n(F)$ an irreducible cuspidal representation of this group with a vector fixed by $I_n+\varpi_F\M_n(\mathcal{O}_F)$. One can parametrize depth-zero cuspidal representations by admissible tame pairs as we now recall (see \cite[Part 5]{BH.11}).

\begin{itemize}[label=\textbullet]
 \item Let $L/F$ be the unramified field extension of degree $n$, of ring of integers $\mathcal{O}_L$. Let $\chi$ be a character 
 of $L^*$ that satisfies:
 \begin{itemize}
 \item $\chi$ is tame, 
 \item $\chi\circ \gamma=\chi\Rightarrow\gamma=id_L$ for all $\gamma$ in $\mathrm{Gal}_F(L)$; we say that $\chi$ is regular.
 \end{itemize}
 Such a pair $(L,\chi)$ is said to be \textbf{tame admissible}.
 \item As $\chi$ is trivial on $1+\mathcal P_L$, $(L,\chi)$ induces a pair $(k_L,\overline{\chi})$ where $\overline\chi$ is a 
 character of $k_L^*$ which satisfies $\overline\chi\circ\overline\gamma=\overline\chi\Rightarrow\overline\gamma=id_{k_L}$ for all $\overline\gamma$ in 
 $\mathrm{Gal}_{k_F}(k_L)$; $\overline\chi$ is said to be \textbf{regular}.~\\
 By Green parametrization, one can associate to $(k_L,\overline\chi)$ an irreducible cuspidal representation 
 $(\overline\pi_{\overline\chi},\mathcal V)$ of $\GL_n(k_F)$ \textit{i.e.} an irreducible representation of $\GL_n(k_F)$ such that 
 for all proper parabolic subgroup $P$ with Levi decomposition $P=MN$, the vector subspace of fixed points of $\mathcal V$ by $N$ 
 is trivial.\\
 More precisely, if one defines an equivalence relation $\sim$ on regular characters of $k_L^*$ by
 $$\bar\chi_1\sim\bar\chi_2\text{ if and only if }\exists \bar\gamma\in\mathrm{Gal}_{k_F}(k_L)\text{ such that }\bar\chi_2=\bar\chi_1\circ\bar\gamma,$$
 one has a bijection:
 \begin{align*}
  \left\{
  \begin{gathered}
    \text{equivalence classes for }\sim\\
    \text{of regular characters of }k_L^*
   \end{gathered}
  \right\}
  &\longrightarrow&
  \left\{
    \begin{gathered}
      \text{equivalence classes of irreducible }\\
      \text{cuspidal representations of }\GL_n(k_F)
  \end{gathered}
  \right\}\\
  \bar\chi\:\:\:&\mapsto&\bar\pi_{\bar\chi}\hspace{5.8cm}
\end{align*}
\item \begin{sloppypar} As $\GL_n(k_F)\simeq \GL_n(\mathcal{O}_F)/1+\varpi_F\mathcal{M}_n(\mathcal{O}_F)$, $\bar\pi_{\bar\chi}$ can be seen as a 
representation of $\GL_n(\mathcal{O}_F)$ that is trivial on $1+\varpi_F\mathcal M_n(\mathcal O_F)$. Then, one can define a 
representation of $F^*\GL_n(\mathcal{O}_F)$, denoted by $\lambda_\chi$, in the following way:\end{sloppypar}
$$\lambda_\chi(xk)=\chi_{|F^*}(x)\bar\pi_{\bar\chi}(k)\text{ for all }x\in F^*, k\in \GL_n(\mathcal{O}_F).$$
\item Finally, we set $\pi(\chi):=c-\Ind_{F^*\GL_n(\mathcal{O}_F)}^{\GL_n(F)}(\lambda_\chi)$ ($c-\Ind$ refers to compact induction), it 
is a depth zero cuspidal representation of $G$. If we denote again by $\sim$ the equivalence relation between admissible tame pairs of 
degree $n$ by
$$\chi_1\sim\chi_2\text{ if and only if }\exists\gamma\in\mathrm{Gal}_F(L)\text{ such that }\chi_2=\chi_1\circ\gamma,$$
one gets a bijection:
\begin{align*}
  \left\{
  \begin{gathered}
    \text{equivalence classes for }\sim\text{ of }\\
    \text{admissible tame pairs of degree }n
   \end{gathered}
  \right\}
  &\longrightarrow&
  \left\{
    \begin{gathered}
      \text{equivalence classes of }\\
      \text{depth $0$ cuspidal }\\
      \text{representations of }\GL_n(F)
  \end{gathered}
  \right\}\\
  (L,\chi)\:\:\:&\mapsto&\pi(\chi)\hspace{4cm}
\end{align*}
\end{itemize}
Let us recall that the central character of $\pi(\chi)$ is $\chi_{|F^*}$ and its contragredient is $\pi(\chi)^\vee\simeq \pi(\chi^{-1})$.

\subsection{Reminder about the building of $\GL_n(F)$}

Let us recall how to describe the Bruhat-Tits building of $\GL_n(F)$ with lattice chains. 

\begin{defi}
 An $\bm{\mathcal O_F}$-lattice chain in $F^n$ is a strictly decreasing sequence (for inclusion) 
  $\mathcal L=(L_k)_{k\in\mathbb Z}$ of lattices such that there exists a unique positive integer $T$ that satisfies: for any uniformizer 
  $\varpi_F$, $\varpi_F L_k=L_{k+T}$ for all $k\in\mathbb Z$. The integer $T$ (or $T(\mathcal L)$) is called the period of 
  $\mathcal L$.
\end{defi}

It is known that $T$ is at most $n$, and that there are lattice chains with period $n$. The group $\GL_n(F)$ naturally acts on the set of lattice chains
$(L_k)_{k\in\mathbb{Z}}$ by $g\cdot(L_k)_{k}=(g\cdot L_k)_k$ for $g\in \GL_n(F)$, and we say that two lattice chains are \textit{equivalent} if they are in the same $Z$-orbit, for $Z$ the center of $\GL_n(F)$. 

\begin{defi}
 As a simplicial complex, the Bruhat-Tits building of $\GL_n(F)$, $X_G$, is defined as the the set of equivalence classes of lattice chains. The $(T-1)$-dimensional simplex being the equivalence classes of lattice chains of period $T$. 
\end{defi}

We identify lattice chains of period one with $Z$-orbits of lattices in $F^n$, and denote by $[L]$ the $Z$-orbit of the lattice $L$: by definition they from the set $X_G^\circ$ of \textit{vertices of $X_G$}. Clearly the group $\GL_n(F)/Z$, hence $\GL_n(F)$ acts on $X_G$ by respecting its simplicial 
structure. Let $\mathcal K$ denote the maximal compact modulo center subgroup $F^*\GL_n(\mathcal{O}_F)$ and let $s_0$ be the vertex of $X_G$ that is stabilized by $\mathcal K$ i.e. the standard lattice chain of period $1$; the vertex $s_0$ is called the \textit{standard vertex} of $X_G$. We recall the following $G$-set isomorphism: 
\begin{equation}\label{iso_immeuble}
\begin{matrix}X_G^\circ&\overset{\sim}{\longrightarrow}&G/\mathcal{K}\\
g\cdot s_0&\longmapsto&g\mathcal{K}   
  \end{matrix}\hspace{1cm}\text{for }g\in G.
  \end{equation}

We will need the geometric realization of $X_G$, denoted by $|X_G|$. Each $T-1$-dimensional simplex of $X_G$ is embedded in 
$\mathbb R^{T-1}$ with the following property: if we consider a $T-1$-dimensional simplex, the points of its geometric realization in $|X_G|$ are given by the set of all barycenters of its vertices. We will use the geometric realization of the building 
$X_G$ given by lattice-functions. The definition comes from Section I.2 of \cite{BL.02}.

\begin{defi}
 A \textbf{lattice-function} of $F^n$ is a map ${\Lambda\::\:\mathbb R\longrightarrow\{\text{lattices of }F^n\}}$ satisfying:
 \begin{itemize}[label=\textbullet]
  \item $\varpi_F\Lambda(r)=\Lambda(r+1)$;
  \item $\Lambda$ is decreasing: for all $r\geq s$, $\Lambda(r)\subseteq\Lambda(s)$;
  \item $\Lambda$ is left-continuous for the discrete topology on lattices.
 \end{itemize}
\end{defi}

Let us explain with more details how the set of lattice-functions allows to realize geometrically the building 
of $\GL_n(F)$. Let $\Lambda$ be a lattice-function of $F^n$, then its image is a lattice chain $\mathcal L=(L_k)_{k\in\mathbb Z}$ 
with period $T$. If we denote by $\lambda_k$ the length of the interval defined by $\{r\in\mathbb R,\Lambda(r)=L_k\}$, then the 
point $x_\Lambda$ of $|X_G|$ associated to $\Lambda$ is the barycenter of the weighted points
$([L_0],\lambda_0),([L_1],\lambda_1),\dots,([L_{T-1}],\lambda_{T-1})$.
Two lattice-functions $\Lambda_1$ and $\Lambda_2$ are said to be \textit{equivalent} if there exists a real number $r_0$ such that 
$\Lambda_1(r)=\Lambda_2(r+r_0)$ for all $r\in\mathbb R$, in which case they realize the same point of the building. We denote by 
$\overline\Lambda$ the class of a lattice-function $\Lambda$. 
Moreover, the group $\GL_n(F)$ naturally acts on the set of lattice-functions by: $(g\cdot\Lambda)(r)=g\cdot(\Lambda(r))$ for 
every lattice-function $\Lambda$, every $g\in \GL_n(F)$ and every real number $r$. Thus, one has the following $G$-set isomorphism:
\[\begin{matrix}\{\text{equivalence classes of lattice-functions of $F^n$}\}&\overset{\sim}{\longrightarrow}&|X_G|\\
   \overline\Lambda&\longmapsto&x_\Lambda
  \end{matrix}\]

Of course, all these reminders are valid for the construction of the building of $\GL_m(E)$, $X_H$.

\subsection{Vertices of the building fixed by the involution}

First we recall the relation between $|X_G|$ and $|X_H|$, we will use the following terminology from type theory.

\begin{defi}
 Let $u\in G$ such that $F_1:=F[u]$ is a field; let us denote by $v_{F_1}$ the normalized valuation of $F_1$ and by $e(F_1/F)$ 
 the ramification index of $F_1/F$. One says that $u$ is minimal on $F$ if: 
 \begin{enumerate}
  \item $\mathrm{gcd}(v_{F_1}(u),e(F_1/F))=1$,
  \item $\varpi_F^{-v_{F_1}(u)}u^{e(F_1/F)}+\mathcal{P}_{F_1}$ generates the residual field extension $k_{F_1}/k_F$.
 \end{enumerate}

\end{defi}

Recall that $E=F[\delta]$ and let us show that $\delta$ can be chosen minimal. 

\begin{itemize}[label=\textbullet]
 \item If $E/F$ is ramified, we recall that $\varpi_F:= \varpi_E^2$. 
 If we choose $\delta=\varpi_E$, then we do have $E=F[\varpi_E]$ and $\delta$ is minimal. Indeed, $v_E(\varpi_E)=1$ so 
 $\mathrm{gcd}(v_E(\varpi_E),e(E/F))=1$ and moreover $\varpi_F^{-1}\varpi_E^{2}+\mathcal{P}_{E}=1+\mathcal{P}_E$ which generates 
 $k_E/k_F$ (because $k_E=k_F$ in the ramified case).
 \item If $E/F$ is unramified (\textit{i.e.} $e(E/F)=1$), then $k_E$ is an extension of $k_F$ with cardinality $q_F^2$ 
 and there exists $\xi\in E^*$ a primitive $(q_F^2-1)^{\text{th}}$ root of unity which generates $E$ over $F$. Set
 $\delta:=\xi^\frac{q_F+1}{2}$. As the order of $\delta$ is $2(q_F-1)$, then $\delta\notin F$ but $\delta^2\in F$, so that we do have 
 $E=F[\delta]$ with $\delta^2\in F$. Moreover, $\delta$ is a minimal element because $v_E(\delta)=0$ so $\mathrm{gcd}(v_E(\delta),e(E/F))=1$ 
 and moreover, $\varpi_F^{0}\delta^{1}+\mathcal{P}_{E}=\delta+\mathcal{P}_E$ generates $k_E/k_F$ (see Theorem 7 and Corollary 3 of 
 Chapter 1, \S 4 of Weil \cite{W.74}).
\end{itemize}

From now on, we choose $\delta=\varpi_E$ if $E/F$ is ramified and $\delta=\xi^\frac{q_F+1}{2}$ (for $\xi$ a primitive 
$(q_F^2-1)^{\text{th}}$ root of unity) if $E/F$ is unramified, thus $\delta$ is minimal. Then by \cite[Lemma XII.4.2]{BS.17} we have:

\begin{lemma}\label{B.S}
We have $|X_G|^\theta=|X_G|^{E^*}$.
\end{lemma}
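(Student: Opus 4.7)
The plan is to prove both inclusions of the claimed equality, working with the lattice-function description of $|X_G|$.

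I first observe that since $\theta = \mathrm{Int}(A)$ is inner and, by construction of the basis $\mathcal{B}$, $A$ is exactly the matrix of multiplication by $\delta \in E^*$ on $F^n = E^m$, the action of $\theta$ on $|X_G|$ coincides with the left-translation action of $A$: for $y = g \cdot s_0 \in |X_G|$ one has $A \cdot y = (AgA^{-1}) \cdot (A \cdot s_0) = \theta(g) \cdot (A \cdot s_0)$, so $y \mapsto A \cdot y$ is $\theta$-equivariant and is the standard extension of $\theta$ to the building. As $A$ lies in the image of $E^* \hookrightarrow G$, one direction is immediate: $|X_G|^{E^*} \subseteq |X_G|^A = |X_G|^\theta$.

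For the reverse inclusion, I take $x \in |X_G|^\theta$ and represent it by a lattice function $\Lambda$, so that there exists $r_0 \in \mathbb{R}$ with $\delta \Lambda(r) = \Lambda(r + r_0)$ for every $r$. Applying this equality twice and using $\delta^2 = \Delta \in F^*$ gives $2 r_0 = v_F(\Delta)$, hence $r_0 = 0$ when $E/F$ is unramified and $r_0 = 1/2$ when $E/F$ is ramified; in both cases $r_0 \geq 0$, so $\delta \Lambda(r) \subseteq \Lambda(r)$ for every $r$. The crucial step is then upgrading this $\delta$-stability to full $\mathcal{O}_E$-stability of each $\Lambda(r)$, and this is where the minimality of $\delta$ established in the preceding paragraphs is essential: in the ramified case $\delta = \varpi_E$, so $\mathcal{O}_E = \mathcal{O}_F + \mathcal{O}_F \varpi_E = \mathcal{O}_F[\delta]$ directly, while in the unramified case $\bar{\delta} \in k_E^*$ generates $k_E$ over $k_F$ by minimality, and Nakayama's lemma applied to $\mathcal{O}_F[\delta] \subseteq \mathcal{O}_E$ again yields $\mathcal{O}_F[\delta] = \mathcal{O}_E$. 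Combined with $\mathcal{O}_F \Lambda(r) \subseteq \Lambda(r)$ and $\delta \Lambda(r) \subseteq \Lambda(r)$, this gives $\mathcal{O}_E \Lambda(r) \subseteq \Lambda(r)$: each $\Lambda(r)$ is an $\mathcal{O}_E$-lattice in $F^n = E^m$.

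To conclude, any $e \in E^*$ factors as $e = u \varpi_E^{v_E(e)}$ with $u \in \mathcal{O}_E^*$; the $\mathcal{O}_E$-stability of $\Lambda(r)$ yields $u \Lambda(r) = \Lambda(r)$, and the relation $\varpi_E \Lambda(r) = \Lambda\bigl(r + 1/e(E/F)\bigr)$ (holding in the unramified case because $\varpi_E = \varpi_F$, and in the ramified case from $\delta \Lambda(r) = \Lambda(r + 1/2)$ established above) then gives $e \Lambda(r) = \Lambda\bigl(r + v_E(e)/e(E/F)\bigr)$. Therefore $e \cdot \Lambda \sim \Lambda$, so $x \in |X_G|^{E^*}$. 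The main obstacle is the third step, namely the identification $\mathcal{O}_F[\delta] = \mathcal{O}_E$ in both ramification cases: this is precisely what the careful verification of the minimality of $\delta$ just before the lemma was arranged to secure.
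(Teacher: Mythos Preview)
Your proof is correct. The paper does not actually prove this lemma: after verifying that $\delta$ is minimal over $F$, it simply invokes \cite[Lemma~XII.4.2]{BS.17}. Your argument is a self-contained unpacking of what that citation provides, and it follows exactly the intended line: the minimality of $\delta$ forces $\mathcal{O}_F[\delta]=\mathcal{O}_E$, so a lattice function fixed by multiplication by $\delta$ (equivalently by $A$, equivalently by $\theta$) takes values in $\mathcal{O}_E$-lattices and is therefore fixed by all of $E^*$. The only point worth tightening is the identification of the $\theta$-action on $|X_G|$ with left translation by $A$; this is standard for inner automorphisms (the canonical action of $\mathrm{Int}(A)$ on lattice chains is $(L_k)\mapsto (AL_k)$), so your brief justification is adequate.
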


Note that an $\mathcal O_E$-lattice of $E^m$ can always be seen as an  $\mathcal O_F$-lattice of $F^{2m}$ because $\mathcal O_E$ is an $\mathcal O_F$-lattice in $F^2$. Theorem 1.1 of \cite{BL.02} then asserts:

\begin{thm}\label{B.L}
 \begin{enumerate}
  \item There exists a unique map $j\::\: |X_H|\longrightarrow |X_G|$ that is $H$-equivariant and affine.
  \item It is injective and $j(|X_H|)=|X_G|^{E^*}$, the set of points that are fixed by $E^*$.
  \item\label{item 3} If $x\in |X_H|$ is associated to the lattice-function $r\mapsto\Lambda(r)$, then $j(x)$ is associated to the lattice-function
  $r\mapsto\Lambda(e(E/F)r)$.
 \end{enumerate}
\end{thm}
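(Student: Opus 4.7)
The plan is to construct $j$ explicitly on representatives (lattice-functions) and then verify the three claims separately, using the identification $\O_E \simeq \O_F \oplus \O_F \d$ coming from the basis $\mathcal{B}$ so that every $\O_E$-lattice of $E^m$ is also an $\O_F$-lattice of $F^{2m}$. Given a lattice-function $\Lambda : \mathbb{R} \to \{\O_E\text{-lattices of } E^m\}$, I define $\tilde\Lambda(r) := \Lambda(e(E/F)\,r)$, viewed as an $\O_F$-lattice-function of $F^{2m}$. The rescaling factor is forced by periodicity: since $\w_F = \w_E^{e(E/F)}$, one has $\w_F\tilde\Lambda(r) = \w_E^{e(E/F)}\Lambda(e(E/F)r) = \Lambda(e(E/F)r + e(E/F)) = \tilde\Lambda(r+1)$, while left-continuity and decrease are inherited from $\Lambda$. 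Passing to equivalence classes gives a well-defined map $j : |X_H| \to |X_G|$, which by construction is exactly the map described in item (3).

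Next I would check $H$-equivariance and affineness. Equivariance is immediate from $(h\cdot\Lambda)(r) = h\cdot\Lambda(r)$ together with the fact that the action of $h \in H$ on $E^m$ coincides with its action on $F^{2m}$ through the basis $\mathcal{B}$. Affineness follows from the barycentric description of $|X_G|$ and $|X_H|$: the substitution $r \mapsto e(E/F)r$ is affine in $r$, so the interval lengths $\l_k$ attached to the values of $\Lambda$ are merely uniformly rescaled, which sends barycenters to barycenters with proportional weights. Injectivity is then a direct verification: $\tilde\Lambda$ determines $\Lambda$ on the cofinal set $e(E/F)^{-1}\mathbb{Z}$, hence on all of $\mathbb{R}$ by left-continuity; the property descends to equivalence classes.

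The main step is the identification $j(|X_H|) = |X_G|^{E^*}$. The inclusion $\subseteq$ is straightforward: each $\tilde\Lambda(r)$ is by construction an $\O_E$-module, so every $x \in E^*$ sends $\tilde\Lambda(r)$ to another $\O_E$-lattice of the form $\tilde\Lambda(r')$, and the resulting translation on $\mathbb{R}$ is absorbed by the equivalence relation. For the reverse inclusion, given a class in $|X_G|^{E^*}$, I would pick a representative $\tilde\Lambda$ that is literally $E^*$-invariant after translation (which one can arrange because $E^*/\O_E^*$ is discrete in the translation direction); then each lattice $\tilde\Lambda(r)$ is stable under $\O_F[\d] = \O_E$. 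The key point, and where the minimality of $\d$ chosen earlier is used, is that $\O_E$-stability together with the $\O_F$-lattice property forces $\tilde\Lambda(r)$ to be a \emph{free} $\O_E$-module of rank $m$. Then defining $\Lambda(s) := \tilde\Lambda(s/e(E/F))$ recovers an $\O_E$-lattice-function whose image under $j$ is the given class.

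Finally, uniqueness of $j$ among $H$-equivariant affine maps follows from the fact that $H$ acts transitively on vertices of $|X_H|$ modulo the center (so modulo translations of the associated lattice-functions), hence the value of such a map at the standard vertex, together with affineness, determines it everywhere; Lemma \ref{B.S} pins down the only consistent choice. I expect the main obstacle to be the surjectivity onto the $E^*$-fixed locus: producing a globally $E^*$-stable representative of a fixed class and then upgrading pointwise $\O_E$-stability to freeness is the delicate part, and it is precisely there that the minimality of $\d$ plays its role.
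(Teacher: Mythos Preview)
The paper does not prove this statement: it is quoted as Theorem~1.1 of \cite{BL.02} (Broussous--Lemaire), with no argument supplied beyond the preliminary remark that an $\O_E$-lattice in $E^m$ is automatically an $\O_F$-lattice in $F^{2m}$. There is therefore no proof in the paper to compare your attempt against.

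Your sketch is along the right lines---the definition via $\tilde\Lambda(r)=\Lambda(e(E/F)r)$ is exactly the map in item~(3), and the easy checks (well-definedness, equivariance, affineness, injectivity) are fine. Two points deserve tightening. First, you misplace the role of minimality of $\d$: once an $\O_F$-lattice $L\subset F^{2m}$ is $\O_E$-stable it is a finitely generated torsion-free module over the PID $\O_E$, hence free of rank $m$ automatically; minimality of $\d$ is used in the paper only for Lemma~\ref{B.S}, not here. Second, the real content in surjectivity is the step you wave through: from a lattice-function whose \emph{class} is $E^*$-fixed, you must produce one whose individual values $\tilde\Lambda(r)$ are $\O_E$-stable. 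This means showing that the translation $r_x$ attached to $x\in\O_E^*$ vanishes; a clean way is to note that $\O_E\subset\M_n(\O_F)$ (so $\O_E^*\subset\GL_n(\O_F)$ preserves lattice indices), whence the chain $\mathrm{Im}(\tilde\Lambda)$ is fixed pointwise by $\O_E^*$. Your uniqueness argument is also incomplete: reducing to the image of one vertex is correct, but invoking Lemma~\ref{B.S} does not by itself explain why that image is forced---you still need that the image must be fixed by the full stabilizer $E^*\GL_m(\O_E)$ and that this has a unique fixed point in $|X_G|$.
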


The theorem above enables us to determine the $H$-orbits of $\theta$-fixed vertices in ${X_G}^\circ$ depending on the ramification of $E/F$. 

\begin{prop}\label{proposition stable vertices}
When $E/F$ is unramified, the set $(X_G^\circ)^\theta$ consists of a unique $H$-orbit, namely that of the standard vertex $s_0$ fixed by $\K$, whereas 
when $E/F$ is ramified $(X_G^\circ)^\theta$ is empty.
\end{prop}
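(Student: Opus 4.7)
The plan is to combine Lemma~\ref{B.S} and Theorem~\ref{B.L} to express $(X_G^\circ)^\theta$ as the set of vertices of $X_G$ lying in $j(|X_H|)$, and then translate this into a period computation on lattice-functions. By Lemma~\ref{B.S} and Theorem~\ref{B.L}(2),
\[(X_G^\circ)^\theta \;=\; X_G^\circ \cap |X_G|^{E^*} \;=\; X_G^\circ\cap j(|X_H|),\]
so the problem reduces to determining which lattice-function classes $\overline{\Lambda}$ of $E^m$ satisfy $j(\overline{\Lambda})\in X_G^\circ$.

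By Theorem~\ref{B.L}(\ref{item 3}), if $\Lambda$ is a lattice-function of $E^m$ representing the class, then $j(\overline{\Lambda})$ is represented by the $F^n$-valued lattice-function $\Lambda'\colon r\mapsto\Lambda(e(E/F)r)$, obtained by viewing each $\mathcal{O}_E$-lattice of $E^m$ as an $\mathcal{O}_F$-lattice of $F^n$ via $\mathcal{B}$. A direct check shows that if the lattice chain underlying $\Lambda$ has period $T$ in $E^m$, then the lattice chain underlying $\Lambda'$ has period $e(E/F)\,T$ in $F^n$: the rescaling $r\mapsto e(E/F)r$ multiplies the number of discontinuities of $\Lambda$ per unit interval by $e(E/F)$, and the resulting $\mathcal{O}_F$-lattices remain pairwise distinct by strict monotonicity of $\Lambda$.

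In the ramified case ($e(E/F)=2$), the period of $\Lambda'$ is $2T\geq 2$, so $j(\overline{\Lambda})$ is never a vertex of $X_G$, and $(X_G^\circ)^\theta=\emptyset$. In the unramified case ($e(E/F)=1$), $\Lambda'=\Lambda$ and the period is preserved, whence $(X_G^\circ)^\theta=j(X_H^\circ)$. Since $\GL_m(E)$ acts transitively on $\mathcal{O}_E$-lattices of $E^m$ by change of basis, $H$ acts transitively on $X_H^\circ$, so $(X_G^\circ)^\theta$ is a single $H$-orbit. To check that it is the orbit of $s_0$, I would observe that with $\delta=\xi^{(q_F+1)/2}\in\mathcal{O}_E^*$ the set $\{1,\delta\}$ is an $\mathcal{O}_F$-basis of $\mathcal{O}_E$, so that $\mathcal{O}_F^n$ read in the coordinates of $\mathcal{B}$ equals $\mathcal{O}_E^m$; hence $s_0=j([\mathcal{O}_E^m])$ lies in $j(X_H^\circ)$.

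The only mildly technical step is the period identity $T(\Lambda')=e(E/F)\,T(\Lambda)$, which is just careful bookkeeping with lattice-functions; given Lemma~\ref{B.S} and Theorem~\ref{B.L}, the argument presents no real obstacle.
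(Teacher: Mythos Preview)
Your proof is correct and follows essentially the same route as the paper's: both use Lemma~\ref{B.S} and Theorem~\ref{B.L} to identify $(X_G^\circ)^\theta$ with $X_G^\circ\cap j(|X_H|)$, then analyze how $j$ affects the period of the underlying lattice chain to separate the ramified and unramified cases. Your version is slightly more explicit in two places --- you state the period identity $T(\Lambda')=e(E/F)\,T(\Lambda)$ uniformly (which cleanly covers non-vertex points of $|X_H|$ as well), and you verify concretely that $s_0=j([\mathcal{O}_E^m])$ via the observation that $\{1,\delta\}$ is an $\mathcal{O}_F$-basis of $\mathcal{O}_E$ in the unramified case --- but the underlying argument is the same.
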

\begin{proof}
When $E/F$ is unramified, the map $j$ is simply the identity on lattice-functions and is simplicial. Thus by , ${(X_G^\circ)}^\theta=j(X_H^\circ)$ whence $H\backslash {(X_G^\circ)}^\theta=H\backslash j(X_H^\circ)=j(H\backslash X_H^\circ)$ by Theorem \ref{B.L}. As $H$ acts transitively on $X_H^\circ$, we deduce that $(X_G^\circ)^\theta$ consists of a unique $H$-orbit. Moreover it is that of $s_0$ because $s_0$ is the image of the standard vertex in $X_H^\circ$ under $j$. When $E/F$ is ramified, then by Theorem \ref{B.L} the map $j$ sends an equivalence class of lattice functions with image a lattice chain of of period $1$ to an equivalence class of lattice functions with image a lattice chain of of period $e(E/F)=2$, i.e. it sends a vertex to an interior point of a simplex of dimension $\geq 1$, so $j(X_H)\cap X_G^\circ$ is empty and the resutl follows again from Theorem \ref{B.L}.
\end{proof}

\subsection{Properties of local constants}\label{section properties of constants}

Let $K'/K$ be a finite separable extension of non-archimedean local fields, if $\psi$ is a non-trivial character of $K$, we denote by $\psi_{K'}$ the character $\psi\circ \Tr_{K'/K}$. We call \textit{the conductor of $\psi$} the smallest integer $d(\psi)$ such that $\psi$ is trivial on $\P_K^{d(\psi)}$. Similarly if $\chi$ is a character of $K^*$, we call 
\textit{the conductor of $\chi$} the integer $c(\chi)$ equal to zero if $\chi$ is unramified, or equal to the smallest 
integer such that $\chi$ is trivial on $1+\P_{K'}^{c(\psi)}$ if $\chi$ is ramified. We say that $\chi$ is tame when 
$c(\chi)\leq 1$. When 
\textit{${K'}/K$ is unramified}, it follows from \cite[Chapter 8, Corollary 3]{W.74} that 
\begin{equation}\label{equation equal conductors}
d(\psi_{K'})=d(\psi).
\end{equation}
 If $\phi$ is a representation of $W_K$ of finite dimension, and $\psi$ is a non-trivial character of $K$, we refer to \cite[3.6.4]{T.79} for the definition of the root number 
$\e(1/2,\phi,\psi)$ (denoted $\e_L$ there). 
One then defines the Langlands $\l$-constant: \[\l({K'}/K,\psi)=\frac{\e(1/2,\Ind_{W_{K'}}^{W_K}(\mathbf{1}_{W_K}),\psi)}{\e(1/2,\mathbf{1}_{W_L},\psi_{K'})}.\] We set \[\omega_{{K'}/K}=\det\circ\Ind_{W_{K'}}^{W_K}(\mathbf{1}_{W_K}),\] it identifies with the quadratic character of $K^*$ with kernel the norms of ${K'}^*$ when ${K'}/K$ is quadratic. For $a\in K^\times$, we set $\psi_a=\psi(a\ . \ )$. These constants enjoy the following list of properties, which we will freely use later in the paper.

\begin{enumerate}
\item \label{equation additivity of epsilon} $\e(1/2,\phi\oplus \phi',\psi)=\e(1/2,\phi,\psi)\e(1/2,\phi',\psi)$ where $\phi'$ is another finite dimensional representation of $W_K$ \cite[(3.4.2)]{T.79}.
\item \label{equation translation caractère additif} $\e(1/2,\phi,\psi_a)=\det(\phi(a))\e(1/2,\phi,\psi)$ (\cite[(3.6.6)]{T.79}).
\item \label{equation galois invariance of epsilon} $\e(1/2,\phi^{\sigma},\psi^\sigma)=\e(1/2,\phi,\psi)$ whenever $\sigma$ is a finite order field automorphism of $K$, as can be checked by the definition of the epsilon factor.
\item \label{equation epsilon times epsilon dual} $\e(1/2,\phi,\psi)\e(1/2,\phi^\vee,\psi^{-1})=1$ (\cite[(3.6.7)]{T.79}).
\item \label{equation torsion NR} If $\chi$ is a character of $K^*$, and $\mu$ is an unramified character of $K^*$, by  \cite[(3.6.5)]{T.79}: 
\[\e(1/2,\mu\chi,\psi)=\mu(\w_K^{d(\psi)+c(\chi)})\e(1/2,\chi,\psi).\]
\item \label{equation FQ} If ${K'}/K$ is a quadratic, $\d\in \ker(\Tr_{{K'}/K})-\{0\}$, $\chi$ is a character of ${K'}^*$ with $\chi_{|K^*}=1$, then by \cite[Theorem 3]{FQ.73}: \[\e(1/2,\chi,\psi_{K'})=\chi(\d).\]
\item \label{equation inductivity} If $\phi_{K'}$ is an $r$-dimensional representation of $W_{K'}$, then 
\[\e(1/2,\Ind_{W_{K'}}^{W_K}(\phi_{K'}),\psi)=\l({K'}/K,\psi)^r\e(1/2,\phi_{K'},\psi_{K'})\] 
(\cite[(30.4.2)]{BH.06}).
\item \label{equation constante de Langlands NR} If ${K'}/K$ is unramified with $[K'/K]=n$: \[\l({K'}/K,\psi)=(-1)^{d(\psi)(n-1)}\] (for example \cite{M.86} and \ref{equation translation caractère additif}., together with Equation (\ref{equation equal conductors}).)
\item \label{equation multiplicativité de la constante de Langlands} If $K''$ is a field with $K\subset K'' \subset {K'}$, then 
\[\l({K'}/K,\psi)=\l({K'}/K'',\psi_{K''})\l(K''/K,\psi)^{[{K'}:K'']}\] (\cite{L.70}).
\item \label{equation square=-1} $\l({K'}/K,\psi)^2=\omega_{{K'}/K}(-1)$ (\cite[(30.4.3)]{BH.06}).
\end{enumerate}

\section{Distinction of depth-zero cuspidal representations when $\mu$ is tame}\label{section distinction when mu is tame}

This case is the easiest case, and we use the proof of \cite[Proposition 5.20]{HM.08} to determine multiplicities. We fix $\pi(\chi)$ a cuspidal representation of $\GL_n(F)$ of depth-zero, and $\mu$ is a character of $E^*$.

\begin{lemma}[\cite{HM.08}]
 Let $x\in X_G^\circ$ a vertex such that $\theta(x)\neq x$. Let $\mathcal K_x$ be the stabilizer of $x$ in $G$, $K_x$ the maximal compact sugbroup of $\mathcal K_x$ and $K_x^1\subseteq K_x$ its pro-unipotent radical. Let  $\overline\sigma$ be a cuspidal representation of 
 $K_x/K_x^1$, let $\sigma$ be the inflation of $\overline\sigma$ to $K_x$. Suppose that $\mu$ is tame and set $\rho:=\mu$, then $\mathrm{Hom}_{K_x\cap H}(\sigma,\rho)=\{0\}$.
\end{lemma}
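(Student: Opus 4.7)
The strategy is to reduce everything to the finite reductive quotient $K_x/K_x^1 \simeq \GL_n(k_F)$, where $\overline\sigma$ lives as a cuspidal representation, and then to derive a contradiction with cuspidality. First I would verify that $\rho = \mu\circ\det_E$ descends through the pro-$p$ radical: the intersection $K_x^1 \cap H$ is a pro-$p$ subgroup of $H \simeq \GL_m(E)$, so its image under $\det_E$ lies in the unique maximal pro-$p$ subgroup $1+\P_E$ of $E^*$, on which the tame character $\mu$ is trivial. Hence $\rho$ descends to a character $\overline\rho$ of the image $\overline J := (K_x\cap H)K_x^1/K_x^1 \hookrightarrow \GL_n(k_F)$, and by the inflation description of $\sigma$ one has $\Hom_{K_x\cap H}(\sigma,\rho) = \Hom_{\overline J}(\overline\sigma,\overline\rho)$.

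Next I would use the hypothesis $\theta(x)\neq x$ to place $\overline J$ inside a proper parabolic. For $g\in K_x\cap H$ one has $\theta(g)=g$ and $g\cdot x=x$, so $g\cdot\theta(x)=\theta(g\cdot x)=\theta(x)$, giving $K_x\cap H\subseteq K_x\cap K_{\theta(x)}$. Since $x\neq\theta(x)$ the latter is a proper sub-parahoric of $K_x$, and standard building theory for $\GL_n$ (via the lattice-chain picture recalled in the previous subsection) shows that its image in $K_x/K_x^1$ lies in a proper parabolic subgroup $\overline P=\overline L\overline U$ of $\GL_n(k_F)$. A fortiori $\overline J\subseteq\overline P$.

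To conclude $\Hom_{\overline J}(\overline\sigma,\overline\rho)=0$ I would follow the Hakim--Murnaghan strategy. Since $\overline\rho$ factors through $\O_E^*/(1+\P_E)\simeq k_E^*$, it has order prime to $p$ and is therefore trivial on the $p$-subgroup $\overline J\cap\overline U$. Any equivariant form $\ell\in\Hom_{\overline J}(\overline\sigma,\overline\rho)$ is thus invariant under $\overline J\cap\overline U$, and one wants to combine this with the vanishing of Jacquet modules $V_{\overline\sigma,\overline U}=0$ (cuspidality) to force $\ell=0$. The main obstacle, and the technical heart of the argument, is upgrading $\overline J\cap\overline U$-invariance to full $\overline U$-invariance: one must either exhibit enough unipotent elements of $\overline U$ inside $\overline J$ so that cuspidality applies directly, or extend $\overline\rho$ trivially to all of $\overline P$ and invoke Frobenius reciprocity together with Jacquet-module vanishing. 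Establishing this structural fact about the $\theta$-fixed sub-parahoric $(K_x\cap K_{\theta(x)})^\theta$ is precisely what the argument of \cite[Proposition 5.20]{HM.08} provides, and one need only check that it adapts verbatim to the pair $(G,H)$ at hand.
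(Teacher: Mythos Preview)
Your proposal is correct and follows essentially the same route as the paper's proof. Both arguments reduce to the finite reductive quotient, use tameness of $\mu$ to kill $\rho$ on the relevant pro-$p$ (resp.\ $p$-) pieces, and then invoke the key structural input from \cite[Proposition~5.20]{HM.08}---namely the existence of a group $U$ with $K_x^1\subset U\subset K_x$ whose image $\overline U$ is the unipotent radical of a proper parabolic and which satisfies $U=U^\theta K_x^1$---to contradict cuspidality of $\overline\sigma$. Your write-up is slightly more explicit in constructing the parabolic via $K_x\cap K_{\theta(x)}$ before deferring to \cite{HM.08} for the crucial surjectivity $U^\theta K_x^1=U$, whereas the paper simply quotes this package from \cite{HM.08} at the outset; the content is the same.
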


\begin{proof}
By the proof of Proposition 5.20 of \cite{HM.08}, if $\theta(x)\neq x$ there is a group $K_x^1\subset U\subset K_x$, such that $\overline U:=U/K_x^1\subset K_x/K_x^1$ is the unipotent radical of a proper parabolic subgroup of 
$K_x/K_x^1\simeq \GL_n(k_F)$ and which satisfies $U=U^\theta K_x^1$ (where the exponent denotes fixed points). Suppose for the sake of contradiction that $\mathrm{Hom}_{K_x\cap H}(\sigma,\rho)\neq\{0\}$, this first implies that ${\rho_{|K_x^1\cap H}=1}$ because $\sigma$ is trivial on $K_x^1$.
Now for $h\in U\cap H$, there exists $\alpha\geq0$ such that $h^{p^\alpha}\in K_x^1\cap H$, which implies that $\rho(h^{p^\alpha})=1$. Thus, $\mu(\det_E(h))^{p^\alpha}=1$ where $\det_E(h)\in\mathcal O_E^\times$. Yet $\mu$ is tame so $\mu_{|\mathcal O_E^\times}$ factors through $\mathcal O_E^\times/(1+\mathcal P_E)$ which is a finite group of order prime to $p$, hence $\mu(det(h))=1$. So $\rho_{|U\cap H}=1$ and \[\{0\}\neq \mathrm{Hom}_{K_x\cap H}(\sigma,\rho)\subset \mathrm{Hom}_{U^\theta}(\sigma,1)\simeq \mathrm{Hom}_{\overline{U}}(\overline{\sigma},1)\] as $U=U^\theta K_x^1$, contradicting the cuspidality of $\sigma$.
\end{proof}

In other words, as each vertex $x$ in $X_G^\circ$ is of the form $g\cdot s_0$ for a certain $g$ in $G$ and its stabilizer is $g\mathcal Kg^{-1}$, this amounts to the following lemma.

\begin{lemma}[\cite{HM.08}]\label{HM}
 If $g\in H\backslash G/\mathcal{K}$ satisfies $\mathrm{Hom}_{H\cap g\mathcal{K}g^{-1}}({}^g\lambda_\chi,\mu)\neq\{0\}$ (where 
 ${}^g\lambda_\chi(x)=\lambda_\chi(g^{-1}xg)$ for all $x$ in $g\mathcal{K}g^{-1}$), then 
 $g\mathcal{K}g^{-1}$ is stable by $\theta$.
\end{lemma}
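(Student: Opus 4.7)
My plan is to deduce this lemma as an immediate reformulation of the preceding one via the $G$-equivariant bijection $X_G^\circ \simeq G/\mathcal{K}$ from (\ref{iso_immeuble}). Under this dictionary the double coset $Hg\mathcal{K}$ corresponds to the $H$-orbit of the vertex $x = g \cdot s_0$, whose $G$-stabilizer is precisely $\mathcal{K}_x = g \mathcal{K} g^{-1}$.

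First, I would record the equivalence: $g \mathcal{K} g^{-1}$ is $\theta$-stable if and only if $\theta(x) = x$. The nontrivial direction follows because $\theta$ carries the stabilizer of $x$ to the stabilizer of $\theta(x)$, so if $\theta(\mathcal{K}_x) = \mathcal{K}_x$ then $x$ and $\theta(x)$ share a stabilizer and must coincide by the bijectivity of (\ref{iso_immeuble}); the converse is immediate.

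Next, I would argue by contrapositive, assuming $g \mathcal{K} g^{-1}$ is not $\theta$-stable (equivalently $\theta(x) \neq x$) and showing that the Hom space vanishes. Setting $K_x = g \GL_n(\mathcal{O}_F) g^{-1}$, the maximal compact subgroup of $\mathcal{K}_x$, and letting $K_x^1$ denote its pro-unipotent radical, the restriction ${}^g\lambda_\chi|_{K_x}$ is by construction the inflation of the cuspidal representation ${}^g\overline{\pi}_{\overline{\chi}}$ of $K_x/K_x^1 \simeq \GL_n(k_F)$. Since $H \cap K_x \subset H \cap g\mathcal{K}g^{-1}$, any nonzero element of $\Hom_{H \cap g\mathcal{K}g^{-1}}({}^g\lambda_\chi, \mu)$ restricts to a nonzero element of $\Hom_{H \cap K_x}({}^g\lambda_\chi|_{K_x}, \mu)$. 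Applying the previous lemma with $\sigma = {}^g\lambda_\chi|_{K_x}$ and $\rho = \mu$ then forces this latter Hom space to be zero, a contradiction.

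I do not anticipate any substantial obstacle: the whole argument is just unwinding the building dictionary. The only mildly subtle point is the verification that $\theta$-stability of $\mathcal{K}_x$ coincides with $\theta$-fixedness of $x$, which hinges on the injectivity of the orbit map (\ref{iso_immeuble}).
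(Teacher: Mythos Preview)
Your proposal is correct and follows exactly the approach the paper intends: the paper simply asserts that this lemma ``amounts to'' the preceding one via the identification of vertices with $G/\mathcal{K}$, and you have spelled out that translation carefully. Your verification that $\theta$-stability of $g\mathcal{K}g^{-1}$ is equivalent to $\theta(x)=x$ (using that $\mathcal{K}$ is its own normalizer) is in fact more detailed than what the paper provides.
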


The next step is:

\begin{lemma}\label{lemma HM} There is an isomorphism of $\mathbb C$-vector spaces:
 \[\mathrm{Hom}_H(\pi(\chi),\mu)\simeq\underset{g\cdot s_0\in H\backslash (X_G^\circ)^\theta}{\prod}\mathrm{Hom}_{H\cap g\mathcal{K}g^{-1}}({}^g\lambda_\chi,\mu).\]
\end{lemma}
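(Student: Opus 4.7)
The plan is to combine the Mackey restriction formula with Lemma \ref{HM} and then translate the resulting condition on double cosets into one on $\theta$-fixed vertices.

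First I would exploit the compact induction $\pi(\chi) = c\text{-}\Ind_{\mathcal K}^G(\lambda_\chi)$. Mackey's restriction formula gives
\[\pi(\chi)_{|H} \simeq \bigoplus_{g \in H \backslash G / \mathcal K} c\text{-}\Ind_{H \cap g\mathcal K g^{-1}}^{H}({}^g\lambda_\chi).\]
Applying $\Hom_H(-,\mu)$ and using Frobenius reciprocity for compact induction turns the direct sum into a product:
\[\Hom_H(\pi(\chi),\mu) \simeq \prod_{g \in H \backslash G/\mathcal K} \Hom_{H \cap g\mathcal K g^{-1}}({}^g\lambda_\chi,\mu).\]
Each factor is finite dimensional by admissibility, so there is no topological or convergence subtlety.

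Next I would apply Lemma \ref{HM}: a double coset $g$ contributes a non-zero factor only if the maximal compact mod center subgroup $g\mathcal K g^{-1}$ is stable under $\theta$. It remains to re-index the surviving cosets by $H \backslash (X_G^\circ)^\theta$. Via the $G$-equivariant identification $G/\mathcal K \simeq X_G^\circ$ of \eqref{iso_immeuble}, double cosets in $H\backslash G/\mathcal K$ correspond bijectively to $H$-orbits in $X_G^\circ$, under $g\mathcal K \leftrightarrow g\cdot s_0$. The stabilizer in $G$ of the vertex $g\cdot s_0$ is precisely $g\mathcal K g^{-1}$, and since a vertex of the building of $\GL_n(F)$ is determined by its center-containing stabilizer, the condition $\theta(g\mathcal K g^{-1}) = g\mathcal K g^{-1}$ is equivalent to $\theta(g\cdot s_0) = g\cdot s_0$. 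Combining the two steps gives the claimed isomorphism.

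The only routine subtlety is to verify that the indexing by $H$-orbits is well defined: replacing $g$ by $hg$ with $h \in H$ conjugates $H \cap g\mathcal K g^{-1}$ by $h$ and transforms ${}^g\lambda_\chi$ accordingly, so the resulting $\Hom$-space is canonically isomorphic to the one at $g$. Once Mackey and this geometric translation are in place, no step presents a serious obstacle.
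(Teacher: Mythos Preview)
Your proof is correct and follows essentially the same route as the paper: Mackey restriction for $c\text{-}\Ind_{\mathcal K}^G(\lambda_\chi)$, Frobenius reciprocity to turn the sum into a product over $H\backslash G/\mathcal K$, then Lemma~\ref{HM} together with the identification~\eqref{iso_immeuble} to cut down to $H\backslash(X_G^\circ)^\theta$. Your explicit remark that a vertex is determined by its (center-containing) stabilizer, so that $\theta$-stability of $g\mathcal K g^{-1}$ is equivalent to $\theta$-fixedness of $g\cdot s_0$, makes precise a step the paper leaves implicit.
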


\begin{proof}Write successively:
 \begin{eqnarray*}
  \mathrm{Hom}_H(\pi(\chi),\mu)&=&\mathrm{Hom}_H(c-\Ind_{\mathcal{K}}^G(\lambda_\chi),\mu)\\
  &\simeq&\mathrm{Hom}_H(\underset{g\in H/G\backslash \mathcal{K}}{\oplus}c-\Ind_{H\cap g\mathcal{K}g^{-1}}^\mathcal{K}({}^g\lambda_\chi),\mu)\\
  &&\text{by Mackey's restriction formula}\\
  &\simeq&\underset{g\in H\backslash G/\mathcal{K}}{\prod}\mathrm{Hom}_{H\cap g\mathcal{K}g^{-1}}({}^g\lambda_\chi,\mu)\\
  &&\text{by Frobenius reciprocity on the left, for compact induction}\\
  &&\text{from a compact modulo center open subgroup}\\
  &\simeq&\underset{g\cdot s_0\in H\backslash X_G^\circ}{\prod}\mathrm{Hom}_{H\cap g\mathcal{K}g^{-1}}({}^g\lambda_\chi,\mu)\text{ thanks to Isomorphism \eqref{iso_immeuble}}\\
  &\simeq&\underset{g\cdot s_0\in H\backslash (X_G^\circ)^\theta}{\prod}\mathrm{Hom}_{H\cap g\mathcal{K}g^{-1}}({}^g\lambda_\chi,\mu)\text{ thanks to Lemma \ref{HM}}
  \end{eqnarray*}
\end{proof}

We denote by $L_0$ the unramified extension of $F$ of degree $m$. Thanks to Theorem \ref{B.L} and the recent paper \cite{P.19} we obtain:

\begin{thm}\label{theorem distinction when mu is tame}
When $\mu$ is tame and $n\geq 4$, we have $\mathrm{Hom}_H(\pi(\chi),\mu)\neq \{0\}$ if and only if $E/F$ is unramified and $\chi_{|L_0^*}=\mu_{|F^*}\circ N_{L_0/F}$, in which case $\mathrm{Hom}_H(\pi(\chi),\mu)\simeq \C$.
\end{thm}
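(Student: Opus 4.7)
The plan is to combine the building-theoretic reductions of Lemma \ref{lemma HM} and Proposition \ref{proposition stable vertices} with the finite-field distinction criterion supplied by \cite{P.19}.

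First I would apply Lemma \ref{lemma HM}, which rewrites $\mathrm{Hom}_H(\pi(\chi),\mu)$ as a product indexed by $H$-orbits in $(X_G^\circ)^\theta$. When $E/F$ is ramified, Proposition \ref{proposition stable vertices} says this set is empty, so the product is zero and the theorem holds vacuously in that direction.

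In the unramified case, Proposition \ref{proposition stable vertices} leaves the single orbit of $s_0$, giving
\[\mathrm{Hom}_H(\pi(\chi),\mu)\simeq \mathrm{Hom}_{H\cap \mathcal{K}}(\lambda_\chi,\mu).\]
Next I would identify $H\cap \mathcal{K}$: since $\delta\in\mathcal{O}_E^\times$ in the unramified case, the basis $\mathcal{B}$ identifies $\mathcal{O}_E^m$ with $\mathcal{O}_F^n$, so $\GL_n(\mathcal{O}_F)\cap H=\GL_m(\mathcal{O}_E)$ and hence $H\cap\mathcal{K}=F^\times\GL_m(\mathcal{O}_E)$. On the $F^\times$-factor, $\lambda_\chi$ is $\chi|_{F^\times}$ while $\mu\circ\det_E$ is $z\mapsto \mu(z)^m=\mu\circ N_{L_0/F}(z)$, imposing the central character condition. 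On $\GL_m(\mathcal{O}_E)$, using $\varpi_E=\varpi_F$ together with the tameness of $\chi$ and of $\mu$, both restrictions factor through $\GL_m(k_E)$, giving $\bar\pi_{\bar\chi}|_{\GL_m(k_E)}$ and $\bar\mu\circ\det$ respectively.

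The main obstacle is the finite-field distinction criterion, which I would cite from \cite{P.19}: for $m\geq 2$,
\[\mathrm{Hom}_{\GL_m(k_E)}(\bar\pi_{\bar\chi},\bar\mu\circ\det)\neq 0 \iff \bar\chi|_{k_{L_0}^\times}=\bar\mu\circ N_{k_{L_0}/k_F},\]
with multiplicity one when non-zero. Granting this, I would repackage the two conditions. Since $L/L_0$ is unramified, tameness of $\chi$ descends to tameness of $\chi|_{L_0^\times}$, so the latter character is determined by its values on $k_{L_0}^\times$ and at a uniformizer $\varpi_{L_0}=\varpi_F$. The finite-field criterion fixes the values on $k_{L_0}^\times$, while the central character condition gives $\chi(\varpi_F)=\mu(\varpi_F)^m=\mu\circ N_{L_0/F}(\varpi_F)$; taken together, the two conditions are equivalent to the single equality $\chi|_{L_0^\times}=\mu|_{F^\times}\circ N_{L_0/F}$. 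The multiplicity-one statement follows from \cite{P.19} combined with the one-dimensionality of the $F^\times$-constraint.
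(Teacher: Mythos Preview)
Your proof is correct and follows essentially the same route as the paper: reduce via Lemma~\ref{lemma HM} and Proposition~\ref{proposition stable vertices} to $\mathrm{Hom}_{H\cap\mathcal{K}}(\lambda_\chi,\mu)$, split off the central character condition $\chi_{|F^*}=\mu_{|F^*}^m$, pass to the residue field, and invoke \cite[Proposition~4.3]{P.19}. Your write-up is slightly more explicit than the paper's in identifying $H\cap\mathcal{K}=F^\times\GL_m(\mathcal{O}_E)$ and in explaining how the central-character equality at $\varpi_F$ combines with the residual condition on $k_{L_0}^\times$ to recover $\chi_{|L_0^*}=\mu_{|F^*}\circ N_{L_0/F}$, but the argument is the same.
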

\begin{proof}
Multiplicity zero in the ramified case is immediate from Lemma \ref{lemma HM} and Theorem \ref{B.L}. When $E/F$ is unramified \ref{lemma HM} and Theorem \ref{B.L} imply that 
\[\mathrm{Hom}_H(\pi,\mu)=\mathrm{Hom}_{H\cap\mathcal{K}}(\lambda_\chi,\mu),\] which is zero 
if $\chi_{|F^*}\neq \mu_{|F^*}^m$. If $\chi_{|F^*}=\mu_{|F^*}^m$ (which is in particular true when $\chi_{|L_0^*}=\mu_{|F^*}\circ N_{L_0/F}$) we obtain \[\mathrm{Hom}_{H\cap\mathcal{K}}(\lambda_\chi,\mu)= \mathrm{Hom}_{H\cap K}(\lambda_\chi,\mu)= 
\Hom_{\overline{H}}(\overline{\pi}_{\overline{\chi}},\overline{\mu}).\] The result then follows from 
\cite[Proposition 4.3]{P.19} (which has the assumption $n\geq 4$).
\end{proof}

\section{On $\mu$-selfduality of $\mu$-distinguished representations}\label{section mu distinguished implies mu selfdual}

Now we take $\mu$ any character of $E^*$ with no restriction on its conductor. We intend to prove that $\mu$-distinguished representations of cuspidal (of any level) representations of any inner form of $\GL_n(F)$ is $\mu$-selfdual automatically. Our result will follow from a classical globalization argument, and the case of 
principal series for split inner forms.

\begin{prop}\label{proposition selfdual principal series}
Let $\pi$ be a generic principal series of $\GL_n(F)$ (induced from a character of a Borel subgroup), and $\mu_1$ be a character of $F^*\times F^*$, and $\mu_2$ be a character of $E^*$. Let $H_1$ be the block diagonal subgroup $\GL_m(F)\times \GL_m(F)$ and $H_2$ be the subgroup $H\simeq \GL_m(E)$ of $\GL_n(F)$. Then if $\pi$ is $\mu_i$-distinguished by $H_i$, then 
\[\pi\simeq {\mu_i}_{|F^*}\otimes \pi^\vee\] (where $F^*$ is diagonally embedded in $F^*\times F^*$ in the first case)
\end{prop}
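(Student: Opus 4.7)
My plan is to realize $\pi$ as the irreducible parabolic induction $\pi \simeq \Ind_B^G(\chi)$ from the standard Borel $B$ of a character $\chi = \chi_1 \otimes \cdots \otimes \chi_n$ of the diagonal torus $T$ (irreducibility being granted by the genericity assumption), and to analyse $\Hom_{H_i}(\pi,\mu_i)$ via Mackey theory applied to the finite double coset space $H_i \backslash G / B$. Each $H_i$-orbit on the flag variety $G/B$ contributes to a filtration of $\Hom_{H_i}(\pi,\mu_i)$ a term of the form $\Hom_{H_i \cap g B g^{-1}}({}^{g}\chi, \mu_i)$ for some orbit representative $g$ (up to the usual twist by modulus characters); by hypothesis at least one such term is nonzero.

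For both $H_1$ and $H_2$ the orbit spaces $H_i \backslash G / B$ are finite and admit explicit combinatorial descriptions: the $H_1$-orbits are classically parametrized by matchings on $\{1,\dots,n\}$ pairing the top $m$ indices with the bottom $m$ (equivalently, by certain involutions in $S_n$), while the $H_2$-orbits, following Offen, are parametrized by fixed-point-free involutions in $S_n$ (this is where $n=2m$ enters). For each orbit representative $g=g_\sigma$, the stabilizer $H_i \cap g_\sigma B g_\sigma^{-1}$ surjects, modulo its unipotent radical, onto a subtorus of $T$ cut out by the involution $\sigma$, and nonvanishing of the corresponding Mackey term forces the pairing relation
\[
\chi_i\,\chi_{\sigma(i)}\big|_{F^*} \;=\; \mu_i\big|_{F^*}
\]
for every $2$-cycle $\{i,\sigma(i)\}$ of $\sigma$ (with $\mu_1|_{F^*}$ read through the diagonal embedding; in the Galois case there is in addition an $E$-rational condition recording that the pair of characters comes from a character of $E^\times$, but it is not needed for $\mu_i$-selfduality).

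From such a pairing it follows that the multiset $\{\chi_1,\dots,\chi_n\}$ coincides, after reordering by $\sigma$, with the multiset $\{\mu_i|_{F^*}\cdot \chi_1^{-1},\dots,\mu_i|_{F^*}\cdot \chi_n^{-1}\}$. Since a principal series of $\GL_n(F)$ induced from the Borel is determined up to isomorphism by the multiset of characters of $T$, I then conclude
\[
\pi \;\simeq\; \Ind_B^G\bigl(\mu_i|_{F^*}\cdot \chi^{-1}\bigr) \;\simeq\; \mu_i|_{F^*}\otimes \Ind_B^G(\chi^{-1}) \;\simeq\; \mu_i|_{F^*}\otimes \pi^\vee,
\]
which is the desired statement.

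The technical heart of the argument lies in the Mackey/geometric-lemma bookkeeping: enumerating orbit representatives, computing the stabilizers $H_i \cap g_\sigma B g_\sigma^{-1}$, and extracting the character-theoretic condition from each orbit. This is however classical in both settings (Flicker for the linear case, Offen and Matringe for the Galois one); the genericity of $\pi$ is what makes it safe to ignore contributions coming from degenerate orbits, since those would impose relations of the form $\chi_i/\chi_j = |\cdot|^{\pm 1}$ incompatible with the regular position of the inducing data of a generic principal series.
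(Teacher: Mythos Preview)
Your strategy is the same as the paper's: Mackey theory applied to the restriction of $\Ind_B^G(\chi)$ to $H_i$, extracting from a contributing double coset a relation of the form $\chi\cdot\chi^{w}=\mu_i|_{F^*}$ for some involution $w\in S_n$, and concluding via irreducibility of $\pi$. Two points in your sketch, however, do not match what actually happens and would need to be repaired.

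First, your description of the $H_1$-orbits is incomplete. Following \cite[Section~3.2]{M.15}, the double cosets $B\backslash G/H_1$ are parametrized by pairs $(w_s,x_s)$ where $w_s\in S_n$ is an \emph{arbitrary} involution (possibly with fixed points) and $x_s:\mathrm{Fix}(w_s)\to\{\pm1\}$ is a balanced sign function. Your ``matchings pairing the top $m$ indices with the bottom $m$'' only account for the fixed-point-free $w_s$; the remaining orbits are genuinely there and cannot be ignored.

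Second, and this is the real gap, genericity is \emph{not} used to discard any orbit. The paper shows that for \emph{every} contributing orbit $s$ the condition
\[
\chi_{|A^{\theta_s}}=\bigl(\delta_{B^{\theta_s}}\delta_B^{-1/2}\mu_{\alpha,\beta}^{u_s}\bigr)_{|A^{\theta_s}},
\]
when evaluated on symmetrized elements $a\,w_s(a)$, collapses to $\chi\cdot\chi^{w_s}=\alpha\beta$; the modulus ratio, computed explicitly in \cite[Proposition~3.6]{M.15}, cancels in this evaluation rather than producing constraints of the form $\chi_i/\chi_j=|\cdot|^{\pm1}$. Genericity enters only to guarantee that $\pi$ is irreducible, so that the permutation relation on $\chi$ yields $\pi\simeq\mu_i|_{F^*}\otimes\pi^\vee$. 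The modulus computation you defer to the literature is thus not a side issue but the technical heart of the argument, and your account of why the non-closed orbits are harmless is incorrect.
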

\begin{proof}
We only do the case $(H_1,\mu_1)$, as the argument for $(H_2,\mu_2)$ is completely similar but simpler due to simplification of quotients of modulus characters (see \cite{C.19} for the parametrization of double cossets involved there, and \cite[(5.3) and Remark 5.4]{BM.19} for the modulus characters involved). 
Here we rather consider distinction by the conjugate $H$ of $H_1$ by the matrix $w_{n}$ of \cite[p.121]{M.15}, and set 
$h(g_1,g_1)=w_{n}^{-1}\diag(g_1,g_2)w_{n}$ for $g_i\in \GL_m(F)$. The character $\mu_1$ is of the form 
$\mu_{\alpha,\beta}(h(g_1,g_1))=\alpha(\det(g_1))\beta(\det(g_2))$ for $\alpha$ and $\beta$ characters of $F^*$. Let $B$ be the 
upper triangular Borel subgroup of $G=\GL_n(F)$ and $\chi$ be a character of the diagonal torus $A$ of $G$ such that 
$\pi=Ind_B^G(\chi)$ is generic. We want to show that if $\pi$ is $\mu_{\alpha,\beta}$-distinguished, then 
\[\pi\simeq \alpha\beta \otimes \pi^\vee.\] This amounts to prove that there is a permutation 
$\sigma\in S_n$ such that \begin{equation}\label{equation sduality} \alpha\beta\chi^{-\sigma}=\chi, \end{equation} where by abuse of notation 
\[(\alpha\beta)(\diag(a_1,\dots,a_n))=\prod_{i=1}^n (\alpha\beta)(a_i).\] 

We will do this by using Mackey theory, i.e. the natural filtration of $Ind_B^G(\chi)_{|H}$ with sub-quotients 
$ind_{u^{-1}Bu\cap H}^{H}((\d_B^{1/2} \chi)^{u^{-1}})$ when $u$ varies through a set of representatives of $B\backslash G/H$ 
(and $(\d_B^{1/2} \chi)^{u^{-1}}:=\d_B^{1/2} \chi(u\ . \ u^{-1})$)
, so that if 
\[\Hom_{H}(Ind_B^G(\chi),\mu_{\alpha,\beta})\neq \{0\}\] then 
some space \[\Hom_H(ind_{u^{-1}Bu\cap H}^{H}((\d_B^{1/2} \chi)^{u^{-1}}),\mu_{\alpha,\beta})\simeq 
\Hom_{B\cap uHu^{-1}}(\d_B^{1/2}\chi,\d_{B\cap uHu^{-1}}\mu_{\alpha,\beta}^u)\subseteq
\Hom_{A\cap uHu^{-1}}(\chi,\frac{\d_{B\cap uHu^{-1}}}{\d_B^{1/2}}\mu_{\alpha,\beta}^u)\] must be nonzero 
(in fact with the representatives $u$ given in \cite[Section 3.2]{M.15} the last inclusion is an equality). 
This will tell us that $\chi$ has to be of a particular form which will give the existence of $\sigma$ such that 
Equation (\ref{equation sduality}) is satisfied.  

 We set \[\e=\diag(1,-1,\dots,1,-1)\in G\] so that $H$ is the subgroup of $G$ fixed under the  conjugation $\theta_{\e}$ by $\e$. 
By a re-interpretation of the discussion in \cite[Section 3.2]{M.15}, the double cosets $B\backslash G/H$ are parametrized by couples $s=(w_s,x_s)$ where $w_s\in S_n\subset G$ is an involution, and $x_s$ is a map from the set of fixed points $\mathrm{Fix}(w_s)$ of $w_s$ in 
$\{1,\dots,n\}$ to $\{\pm 1\}$, such that $|x_s^{-1}(\{-1\})|=|x_s^{-1}(\{1\})|=\frac{|\mathrm{Fix}(w_s)|}{2}$. The corresponding representative $u_s$ in $B\backslash G/H$ in particular satisfies $u_s\e u_s^{-1} \e^{-1}=w_s$, and we set \[\theta_s(x)=w_s\theta_{\e}(x)w_s^{-1}= u_s\theta_{\e}(u_s^{-1} x u_s ) u_s^{-1}\] for $x\in G$. Conjugation by $u_s$ stabilizes $A$, and $\theta_s$ as well. 
Suppose that $\pi$ is $\mu_{\alpha,\beta}$-distinguished, by the Mackey strategy discussed above (see also before Theorem \cite[Theorem 3.14]{M.15}), there is $s$ such that 
\[\chi_{|A^{\theta_s}}=(\d_{B^{\theta_s}}\d_B^{-1/2}\mu_{\alpha,\beta}^{u_s})_{|A^{\theta_s}},\] where the exponent $\theta_s$ denotes the fixed points of $\theta_s$ in the corresponding set (which is not necessarily $\theta_s$-stable, for example $B$), and 
$\mu_{\alpha,\beta}^{u_s}(a_s)=\mu_{\alpha,\beta}(u_s^{-1} a_s u_s)$ for $a_s\in A^{\theta_s}$.
The character $\d_{B^{\theta_s}}\d_B^{-1/2}$ restricted to $A^{\theta_s}$ is computed in \cite[Proposition 3.6]{M.15}. We extend 
$x_s$ from $\mathrm{Fix}(w_s)$ to $\{1,\dots,n\}$ by $0$ outside $\mathrm{Fix}(w_s)$. Then for $a=\diag(a_1,\dots,a_n)\in A^{\theta_s}$ one has: 
\[\d_{B^{\theta_s}}\d_B^{-1/2}(a)=\prod_{1\leq i<j \leq n} |a_i|^{\frac{x_s(i)x_s(j)}{2}}|a_j|^{-\frac{x_s(i)x_s(j)}{2}}.\]
On the other hand by a computation similar to that done in the proof of \cite[proposition 3.6]{M.15}, we have for $a=\diag(a_1,\dots,a_n)\in A^{\theta_s}$ (note that for any $i$ one has $a_{w_s(i)}=a_i$):
\[\mu_{\alpha,\beta}^{u_s}(a)=\prod_{i\in x_s^{-1}(\{1\})}\alpha(a_i)\prod_{i\in x_s^{-1}(\{-1\})}\beta(a_i)
\prod_{i\in x_s^{-1}(\{0\}), \ i<w_s(i)}\alpha\beta(a_i).\]
For $a\in A$ we set $w_s(a)=w_s a w_s^{-1}$, so that $aw_s(a)\in A^{\theta_s}$, then from the relations above it follows that 
for $a\in A$ (note that $x_s\circ w_s=-x_s$ and is order reversing on $\{1,\dots,n\}-\mathrm{Fix}(w_s)$): \[\chi(aw_s(a))=\alpha(a)\beta(a),\] i.e. 
\[\chi\chi^{w_s}=\alpha\beta\] so we can choose the sought $\sigma\in S_n$ to be $w_s$.
\end{proof}

As in \cite[Proposition 5.2]{BM.19}, we deduce from Proposition \ref{proposition selfdual principal series}, using the globalization results of \cite{PSP.08} and \cite{GL.18} together with the strong multiplicity one theorems from \cite{B.08} and \cite{BR.17}, the following result.

\begin{prop}\label{proposition selfdual cuspidal representations}
Let $D$ be an $F$-division algebra of index $d$ and $n$ a positive integer such that $nd$ is even, let $H$ be the centralizer of $E$ in $G=\GL_n(D)$. Let $\mu$ be a character of $E^*$ identified via the reduced norm to a character of $H$, then a cuspidal representation $\pi$ of $G$ which is $\mu$-distinguished satisfies 
\[\pi\simeq \mu_{|F^*}\otimes \pi^\vee.\]
\end{prop}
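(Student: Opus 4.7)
The proof follows the standard globalization strategy of \cite[Proposition 5.2]{BM.19}. First I would embed the local datum into a global one: choose a number field $k$ with a place $v_0$ such that $k_{v_0}\simeq F$, a quadratic extension $\mathbb{K}/k$ such that $\mathbb{K}_{v_0}\simeq E$, and a central simple $k$-algebra $\mathbb{D}$ of reduced degree $d$ with $\mathbb{D}_{v_0}\simeq D$ and $\mathbb{D}_v$ split outside a small finite set of auxiliary places. Extend $\mu$ to a Hecke character $M$ of $\mathbb{A}_{\mathbb{K}}^\times/\mathbb{K}^\times$ with $M_{v_0}=\mu$, and let $\mathbb{H}$ denote the global centralizer of $\mathbb{K}$ in $\GL_n(\mathbb{D})$.

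Next I would apply the globalization theorem of \cite{PSP.08}, in the distinction-preserving form of \cite{GL.18}, to produce a cuspidal automorphic representation $\Pi$ of $\GL_n(\mathbb{D})(\mathbb{A}_k)$ such that $\Pi_{v_0}\simeq\pi$ and $\Pi$ is globally $M$-distinguished, i.e.\ the period integral of $\varphi\cdot(M^{-1}\circ\det_{\mathbb{K}})$ over the appropriate adelic quotient of $\mathbb{H}$ is nonzero for some $\varphi\in\Pi$. I would moreover arrange that, outside a controlled finite set of places, $\mathbb{D}_v$ is split, $v$ splits in $\mathbb{K}/k$, and $\Pi_v$ is an unramified generic principal series.

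At each such good place $v$, the local group $G_v$ is isomorphic to $\GL_{nd}(k_v)=\GL_{2m}(k_v)$, the local subgroup $H_v$ is conjugate to the block-diagonal $\GL_m(k_v)\times\GL_m(k_v)$, and $M_v$ corresponds under this identification to a product character of the form $\mu_{\alpha_v,\beta_v}$ as in Proposition \ref{proposition selfdual principal series}. Global $M$-distinction of $\Pi$ forces local $M_v$-distinction of $\Pi_v$, so Proposition \ref{proposition selfdual principal series} applies in the $H_1$-case and yields
\[\Pi_v\simeq M_v|_{k_v^\times}\otimes\Pi_v^\vee.\]
Since this holds at almost every place, strong multiplicity one for cuspidal automorphic representations of inner forms (\cite{B.08},\cite{BR.17}) upgrades it to the global isomorphism $\Pi\simeq M|_{\mathbb{A}_k^\times}\otimes\Pi^\vee$, which specializes at $v_0$ to the desired $\pi\simeq\mu|_{F^\times}\otimes\pi^\vee$.

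The main obstacle is the globalization step itself: producing a cuspidal $\Pi$ with the prescribed $v_0$-component \emph{and} a nonvanishing global $M$-period. This is precisely what the distinction-preserving globalization of \cite{GL.18} delivers, but its hypotheses---local $\mu$-distinction of $\pi$ at $v_0$, compatibility of central characters via $\omega_\pi=\mu^{nd/2}|_{F^\times}$, and the availability of enough split auxiliary places where Proposition \ref{proposition selfdual principal series} can be invoked---must be matched carefully. After this setup is in place, the rest of the deduction is formal.
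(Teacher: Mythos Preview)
Your proposal is correct and follows exactly the same route as the paper: the paper's proof is a one-line reference to \cite[Proposition 5.2]{BM.19}, invoking Proposition \ref{proposition selfdual principal series} at the (split) auxiliary places, the globalization results of \cite{PSP.08} and \cite{GL.18}, and strong multiplicity one from \cite{B.08} and \cite{BR.17}. You have simply spelled out the details of that citation, including the correct identification of $H_v$ with the block-diagonal $\GL_m\times\GL_m$ at split places and the use of the $H_1$-case of Proposition \ref{proposition selfdual principal series} there.
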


Here are two important corollaries for depth-zero cuspidal representations of $\GL_n(F)$. 

\begin{cor}\label{corollary mu's restriction is tame}
Let $\pi$ be a cuspidal representation of $\GL_n(F)$ which is of depth zero, and $\mu$-distinguished, then automatically 
$\mu_{|F^*}$ is tame (i.e. $\mu(1+\P_F)=1$). 
\end{cor}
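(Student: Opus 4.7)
The approach is to combine Proposition~\ref{proposition selfdual cuspidal representations} with the formula for the conductor of a twist.

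Applying Proposition~\ref{proposition selfdual cuspidal representations} with $D = F$ and $n = 2m$ to our $\mu$-distinguished $\pi$ yields the isomorphism
\[
\pi \;\simeq\; (\mu_{|F^*}\circ\det) \otimes \pi^\vee.
\]
Both $\pi = \pi(\chi)$ and its contragredient $\pi^\vee = \pi(\chi^{-1})$ are depth-zero cuspidal representations of $\GL_n(F)$, and in particular both have Jacquet--Piatetski-Shapiro--Shalika conductor exponent equal to $n$.

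On the other hand, the classical conductor formula for twists asserts that for an irreducible generic representation $\pi'$ of $\GL_n(F)$ and a character $\eta$ of $F^*$ with $c(\eta) > c(\pi')/n$, one has $c(\eta \otimes \pi') = n \cdot c(\eta)$. Applied with $\pi' = \pi^\vee$ of conductor exponent $n$ (so $c(\pi^\vee)/n = 1$) and $\eta = \mu_{|F^*}$: if $\mu_{|F^*}$ were not tame then $c(\mu_{|F^*}) \geq 2$, forcing $c((\mu_{|F^*}\circ\det) \otimes \pi^\vee) = n \cdot c(\mu_{|F^*}) \geq 2n$, which strictly exceeds $c(\pi) = n$. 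This contradicts the displayed isomorphism, so $\mu_{|F^*}$ must be tame.

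The main obstacle is invoking the conductor-of-twists formula in the appropriate generality, though it is a classical result. A self-contained alternative route uses type theory: the right-hand side of the displayed isomorphism identifies via the projection formula with $c-\Ind_{\mathcal K}^G(\lambda_{\chi^{-1}}\cdot(\mu_{|F^*}\circ\det)|_{\mathcal K})$, and since $N_G(\mathcal K) = \mathcal K$ and $\pi$ is a depth-zero supercuspidal, the uniqueness of depth-zero types forces this irreducible inducing datum to be $\mathcal K$-isomorphic to $\lambda_\chi$. Restricting both representations to the normal pro-$p$ radical $K^1 = 1 + \varpi_F \M_n(\mathcal O_F)$ of $\GL_n(\mathcal O_F)$---on which $\lambda_\chi$ acts trivially, while the twisted datum acts by $k\mapsto\mu_{|F^*}(\det k)$---and using $\det(K^1) = 1+\mathcal P_F$, one obtains $\mu_{|F^*}(1+\mathcal P_F)=\{1\}$, i.e.~$\mu_{|F^*}$ is tame.
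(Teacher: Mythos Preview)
Both routes you give are correct. The paper's own proof is close in spirit to your type-theoretic alternative, though phrased at the level of the parametrizing character rather than the inducing datum: from $\pi(\chi)\simeq\mu_{|F^*}\otimes\pi(\chi^{-1})$ it extracts an equality $\chi^\gamma=(\mu_{|F^*}\circ N_{L/F})\,\chi^{-1}$ for some $\gamma\in\Gal_F(L)$, and then observes that since both $\chi^\gamma$ and $\chi^{-1}$ are tame so is $\mu_{|F^*}\circ N_{L/F}$, whence $\mu_{|F^*}$ is tame because $N_{L/F}(1+\P_L)=1+\P_F$. The passage from the isomorphism of representations to this character identity is precisely the matching of inducing data on $\K$ that your alternative spells out; your version has the merit of making clear that this step does not presuppose tameness of $\mu_{|F^*}$.

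Your primary conductor argument is a genuinely different route: it never touches the tame-pair parametrization or any matching of types, relying only on the self-duality from Proposition~\ref{proposition selfdual cuspidal representations} together with the classical formula for the conductor of a character twist. This is conceptually clean and uniform, at the cost of invoking a result external to the paper, whereas the paper's argument (and your alternative) stay entirely within the depth-zero framework already set up in Section~\ref{not_cusp}.
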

\begin{proof}
Write $\pi=\pi(\chi)$. By Proposition \ref{proposition selfdual cuspidal representations} we have $\chi^\gamma=\mu_{|F^*} \circ N_{L/F}.\chi^{-1}$ for some $\gamma\in \Gal_F(L)$. But because $\chi^\gamma$ and $\chi^{-1}$ are both tame, the result follows from the fact that $N_{L/F}(1+\P_L)=1+\P_F$.
\end{proof}

We denote by $L_0$ the unramified extension of $F$ of degree $m$.

\begin{cor}\label{corollary distinguished determines restriction to L0}
Suppose that $n\geq 4$. Let $\pi(\chi)$ be a cuspidal $\mu$-distinguished representation of $\GL_n(F)$ of depth zero. Then 
\[\chi_{|L_0^*}=\mu_{|F^*} \circ N_{L_0/F}.\]
\end{cor}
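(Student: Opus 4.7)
The plan is to combine Proposition \ref{proposition selfdual cuspidal representations} with the parametrization of depth-zero cuspidal representations by admissible tame pairs to obtain a Galois-theoretic constraint on $\chi$, and then exploit regularity of $\chi$ together with the central character condition forced by $\mu$-distinction.

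By Proposition \ref{proposition selfdual cuspidal representations} we have $\pi(\chi)\simeq \mu_{|F^*}\otimes \pi(\chi)^\vee=\pi((\mu_{|F^*}\circ N_{L/F})\chi^{-1})$, which under the parametrization translates into the existence of $\gamma\in\Gal_F(L)$ with $\chi^\gamma=\eta\chi^{-1}$, where $\eta:=\mu_{|F^*}\circ N_{L/F}$ is Galois-invariant. Applying $\gamma$ again and using the Galois-invariance of $\eta$ gives $\chi^{\gamma^2}=\chi$, so by regularity of $\chi$ one has $\gamma^2=\mathrm{id}$. Since $\Gal_F(L)$ is cyclic of order $n=2m$ generated by the Frobenius $\phi$, this leaves only $\gamma=\mathrm{id}$ or $\gamma=\phi^m$.

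The case $\gamma=\mathrm{id}$ gives $\chi^2=\eta$ Galois-invariant, so $\tau:=\chi^\phi\chi^{-1}$ is a tame quadratic character of $L^*$. The key small observation is that every tame quadratic character of $L^*$ is $\phi$-fixed, because the $q_F$-th power map preserves the unique nontrivial quadratic character of $k_L^*$ (as $q_F$ is odd). It follows that $\chi^{\phi^k}\chi^{-1}=\tau^k$, and $\tau^2=1$ gives $\chi^{\phi^2}=\chi$, contradicting regularity since $\phi^2\neq\mathrm{id}$ when $n\geq 4$. This step is the main (though elementary) obstacle, and it is where the hypothesis $n\geq 4$ really enters.

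Hence $\gamma=\phi^m$, and the relation $\chi\cdot\chi^{\phi^m}=\eta$ on $L^*$ rewrites as $(\chi_{|L_0^*})\circ N_{L/L_0}=(\mu_{|F^*}\circ N_{L_0/F})\circ N_{L/L_0}$, using $N_{L/L_0}(x)=x\phi^m(x)$ and $N_{L/F}=N_{L_0/F}\circ N_{L/L_0}$. Therefore $\alpha:=\chi_{|L_0^*}\cdot(\mu_{|F^*}\circ N_{L_0/F})^{-1}$ is a character of $L_0^*$ trivial on the index-two subgroup $N_{L/L_0}(L^*)$, hence equals either $1$ or the unramified quadratic character $\omega_{L/L_0}$ of $L_0^*$, which satisfies $\omega_{L/L_0}(\varpi_F)=-1$. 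Finally, the central character condition---$\mu$-distinction forces $\omega_{\pi(\chi)}=\mu_{|F^*}^m$ on $F^*=Z(G)\cap Z(H)$, and $\omega_{\pi(\chi)}=\chi_{|F^*}$---yields $\alpha_{|F^*}=1$, so in particular $\alpha(\varpi_F)=1$, ruling out $\alpha=\omega_{L/L_0}$. Hence $\alpha=1$ and the corollary follows.
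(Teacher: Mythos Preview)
Your proof is correct and follows essentially the same route as the paper's: both use Proposition \ref{proposition selfdual cuspidal representations} to get $\chi^\gamma=(\mu_{|F^*}\circ N_{L/F})\chi^{-1}$, reduce to $\gamma^2=\mathrm{id}$ by regularity, exclude $\gamma=\mathrm{id}$ via the observation that $\chi^2$ Galois-invariant forces $\chi^{\phi^2}=\chi$ (the paper phrases this as an order bound $\overline{\chi}^{2(q_F-1)}=1$ contradicting $\overline{\chi}^{q_F^2-1}\neq 1$, which is the same thing), and then finish with the central character on $\varpi_F$. Your packaging of the last step via $\alpha\in\{1,\omega_{L/L_0}\}$ is equivalent to the paper's ``agree on units by surjectivity of $N_{L/L_0}$, agree on $\varpi_F$ by central character.''
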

\begin{proof}
Thanks to Proposition \ref{proposition selfdual cuspidal representations}, there is $\gamma\in \Gal_F(L)$ such that 
$\chi^\gamma=\mu_{|F^*} \circ N_{L/F} \chi^{-1}$. Because $\chi$ and and $\mu_{|F^*}$ are tame this this reduces to 
$\overline{\chi}^{\overline{\gamma}}=\overline{\mu_{|F^*}}\circ N_{L/F} \overline{\chi}^{-1}$. This implies that $\chi^{\gamma^2}=\chi$, hence that $\gamma$ has order dividing two because $\chi$ is regular. If $\gamma$ was trivial one would have $\chi^2=\mu_{|F^*} \circ N_{L/F}$. Because $\chi$ and $\mu_{|F^*}$ are tame this would imply 
\[\overline{\chi}^2=\overline{\mu}\circ N_{k_L/k_F}.\] But the group of characters of the form $\alpha\circ  N_{k_L/k_F}$ for $\alpha$ a character of $k_F^*$ form a group of order $q_F-1$ so one should have $\overline{\chi}^{2(q_F-1)}=1$. But because 
$\chi$ is regular and $n\geq 4$, the character $\overline{\chi}^{q_F^2-1}$ must be nontrivial, hence $\overline{\chi}^{2(q_F-1)}\neq 1$. Thus $\gamma$ is the conjugation of $L/L_0$ so $\chi\circ N_{L/L_0}=\mu_{|F^*} \circ N_{L/F}$, and 
$\chi$ and $\mu_{|F^*} \circ N_{L_0/F}$ agree on the units of $L_0^*$ because $L/L_0$ is unramified. Finally they also agree on $\w_F$ by central character considerations.
\end{proof}

\section{Distinction of depth-zero cuspidal representations}\label{section mu distinction}
 
We want to show that the necessary condition obtained in the above section is also sufficient when $\mu$ is not tame. By Proposition \ref{proposition stable vertices} and Lemma \ref{HM}, 
the contribution to distinction in Mackey formula will in this case arise from double cosets in $H\backslash G /\K$ corresponding to $H$-orbits of non $\theta$-fixed vertices of $X_G$. For such double cosets, the distinction problem reduces residually to the existence of a twisted Shalika model, which have been studied by Prasad in \cite{P.00}. We recall his result.

\subsection{Twisted Shalika models over finite fields}

Let $\pi$ be an irreducible representation of $\GL_n(k_F)$, and $\alpha$ be a character of $k_F^*$, and $\psi$ be a nontrivial character of $k_F$. We recall that we call the Shalika subgroup of $\GL_n(k_F)$ the group: 
\[S_n(k_F)=\{\begin{pmatrix} g & \\ & g \end{pmatrix}\begin{pmatrix} I_m & x \\ & I_m \end{pmatrix}, g\in \GL_m(k_F),\ x\in \M_m(k_F)\} .\] On then defines the character $\Psi_\alpha$ of $S_n(k_F)$ by the formula:
\[\Psi_\alpha(\begin{pmatrix} g & \\ & g \end{pmatrix}\begin{pmatrix} I_m & x \\ & I_m \end{pmatrix})=\alpha(\det(g))\psi(\Tr(x)).\]
We say that $\pi$ has an \textit{$\alpha$-twisted Shalika model} if 
\[\Hom_{S_n(k_F)}(\pi,\Psi_\alpha)\neq 0,\] and this does not depend on the choice of $\psi$. The following proposition is due to Prasad (\cite[Theorem 1]{P.00}).

\begin{prop}\label{proposition characterization of Shalika model in terms of Green}
Let $\overline{\pi}_{\overline{\chi}}$ be a cuspidal representation of $\GL_n(k_F)$, then $\overline{\pi}_{\overline{\chi}}$ has an $\alpha$-twisted Shalika model if and only if $\overline{\chi}_{|k_{L_0}^*}=\alpha\circ N_{L_0/F}$ in which case $\Hom_{S_n(k_F)}(\overline{\pi}_{\overline{\chi}},\Psi_\alpha)\simeq \C$.
\end{prop}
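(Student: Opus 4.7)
By Frobenius reciprocity, the multiplicity equals the character inner product
\[
\langle \chi_{\overline{\pi}_{\overline{\chi}}}, \Ind_{S_n(k_F)}^{\GL_n(k_F)}\Psi_\alpha\rangle=\frac{1}{|S_n(k_F)|}\sum_{s\in S_n(k_F)}\chi_{\overline{\pi}_{\overline{\chi}}}(s)\overline{\Psi_\alpha(s)}.
\]
Parametrizing $s=\begin{pmatrix}g & gx\\ 0 & g\end{pmatrix}$ with $(g,x)\in\GL_m(k_F)\times\M_m(k_F)$ factors this sum as
\[
\frac{1}{|\GL_m(k_F)|\,q_F^{m^2}}\sum_{g}\alpha(\det g)^{-1}\sum_{x}\chi_{\overline{\pi}_{\overline{\chi}}}\!\left(\begin{pmatrix}g & gx\\ 0 & g\end{pmatrix}\right)\psi(-\Tr x).
\]

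My plan is first to evaluate the inner $x$-sum, which is a Fourier transform of a cuspidal character on the additive group $\M_m(k_F)$. Using the Deligne--Lusztig realization $\overline{\pi}_{\overline{\chi}}=\pm R_{T_L}^{\GL_n}(\overline{\chi})$ attached to the elliptic torus $T_L\simeq k_L^*$, together with Green's character formula on the Jordan decomposition of the Shalika element, this Fourier transform should annihilate every contribution except from $g\in\GL_m(k_F)$ whose characteristic polynomial is irreducible of degree $m$ with roots generating exactly the degree-$m$ unramified subfield $k_{L_0}\subset k_L$; on such $g$ the remaining value should be, up to an explicit constant, $\overline{\chi}|_{k_{L_0}^*}$ evaluated at an eigenvalue of $g$.

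After this reduction the outer sum over $g$, passing from $\GL_m(k_F)$-conjugacy classes of tori of type $L_0$ to a sum over $k_{L_0}^*$ itself, becomes a character inner product of the form
\[
C\cdot\sum_{z\in k_{L_0}^*}\overline{\chi}(z)\,\bigl(\alpha\circ N_{L_0/F}\bigr)(z)^{-1},
\]
where $C$ absorbs the counting constants (number of tori, size of the centralizer, the Galois degree $|\Gal(k_L/k_{L_0})|=2$). By orthogonality of characters on the cyclic group $k_{L_0}^*$ this vanishes unless $\overline{\chi}|_{k_{L_0}^*}=\alpha\circ N_{L_0/F}$, in which case a bookkeeping check should show $C\cdot|k_{L_0}^*|$ gives multiplicity exactly one.

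The main obstacle will be the Fourier-inversion step: Shalika elements are generically not semisimple, so Green's formula evaluates Green functions on a variety of unipotent elements in centralizers $C_{\GL_n}(s_s)$ of different shapes, and checking that only tori of type $L_0$ survive the summation against $\psi(-\Tr x)$ requires a careful combinatorial identity. A possibly cleaner route is a Mackey-type manipulation of the pairing $\langle R_{T_L}^{\GL_n}(\overline{\chi}),\Ind_{S_n(k_F)}^{\GL_n(k_F)}\Psi_\alpha\rangle$: intersecting $T_L\simeq k_L^*$ with $\GL_n(k_F)$-conjugates of $S_n(k_F)$ naturally exposes the subtorus $k_{L_0}^*\subset k_L^*$ corresponding to the chain $k_F\subset k_{L_0}\subset k_L$ as the locus supporting the pairing, which both explains the appearance of $N_{L_0/F}$ and makes the multiplicity-one upper bound evident.
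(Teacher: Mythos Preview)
Your proposal is not a proof but a sketch, and you yourself flag the essential gap: the Fourier-inversion step that should isolate the elliptic-regular $g$ of type $k_{L_0}$ is precisely where all the content lies, and you have not carried it out. Until that step is executed, nothing in the outline forces the multiplicity to be $0$ or $1$ rather than some other integer.

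The paper's proof takes an entirely different and much shorter route. It does not compute any character sums. Instead it invokes \cite[Theorem 1]{P.00} directly: Prasad shows that the twisted Jacquet module $(\overline{\pi}_{\overline{\chi}})_{N,\Psi}$, viewed as a $\GL_m(k_F)$-module, is isomorphic to $\Ind_{k_{L_0}^*}^{\GL_m(k_F)}(\overline{\chi}_{|k_{L_0}^*})$. Once this is granted, one line of Frobenius reciprocity finishes the argument:
\[
\Hom_{S_n(k_F)}(\overline{\pi}_{\overline{\chi}},\Psi_\alpha)\simeq\Hom_{\GL_m(k_F)}\bigl(\Ind_{k_{L_0}^*}^{\GL_m(k_F)}(\overline{\chi}_{|k_{L_0}^*}),\alpha\circ\det\bigr)\simeq\Hom_{k_{L_0}^*}\bigl(\overline{\chi}_{|k_{L_0}^*},\alpha\circ N_{L_0/F}\bigr),
\]
which is one-dimensional exactly when $\overline{\chi}_{|k_{L_0}^*}=\alpha\circ N_{L_0/F}$ and zero otherwise.

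In other words, the ``main obstacle'' you identify is exactly the statement of Prasad's theorem; what you are proposing amounts to reproving that theorem from scratch. That is a legitimate project, but it is not what the paper does, and the paper's proposition is explicitly attributed to Prasad. If you want a self-contained argument, you would need to actually perform the Green-function computation or the Mackey analysis you allude to; as written, the proposal defers precisely the step that requires proof.
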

\begin{proof}
We denote by $N$ the subgroup of matrices $n(x)=\begin{pmatrix} I_m & x \\ & I_m \end{pmatrix}$ in $\GL_n(k_F)$ and by $(\overline{\pi}_{\overline{\chi}})_{N,\Psi}$ the quotient of $\overline{\pi}_{\overline{\chi}}$ by 
$\{v-\psi(\Tr(x))v, \ n(x) \in N, \ v\in \overline{\pi}_{\overline{\chi}}\}$.  The space $(\overline{\pi}_{\overline{\chi}})_{N,\Psi}$ is a $\GL_n(k_F)$-module (for diagonal action). Then by \cite[Theorem 1]{P.00}, we have 
\[(\overline{\pi}_{\overline{\chi}})_{N,\Psi}=\Ind_{k_{L_0}^*}^{\GL_n(k_F)}(\overline{\chi}_{|k_{L_0}^*}).\] Now by definition 
we have \[\Hom_{S_n(k_F)}(\overline{\pi}_{\overline{\chi}},\Psi_\alpha)\simeq \Hom_{\GL(m,k_F)}(\Ind_{k_{L_0}^*}^{\GL_n(k_F)}(\overline{\chi}_{|k_{L_0}^*}), \alpha\circ \det)\] and this latter space is isomorphic to 
\[\Hom_{k_{L_0}^*}(\overline{\chi}_{|k_{L_0}^*}, \alpha\circ \det)=\Hom_{k_{L_0}^*}(\overline{\chi}_{|k_{L_0}^*}, \alpha\circ N_{L_0/F}),\] and the statement follows.
\end{proof}

\begin{rem}
The condition in Proposition \ref{proposition characterization of Shalika model in terms of Green} is also equivalent to 
$\overline{\pi}_{\overline{\chi}}\simeq \alpha \otimes \overline{\pi}_{\overline{\chi}}^\vee$. 
\end{rem}

\subsection{Double cosets contributing to distinction}\label{section distinction}

Take $\Delta\in F^\times$ with square root $\d$ generating $E/F$, which we take of valuation $0$ when $E/F$ is unramified and of valuation $1$ when $E/F$ is ramified. The subgroup $H$ of $\GL_n(F)$ consists of invertible matrices of the form $\begin{pmatrix} a & b \\ \D b & a\end{pmatrix}.$  The character $\mu$ of $H$ satisfies
\[\mu\begin{pmatrix} a &  b \\ \D b & a\end{pmatrix}=\mu(\det(a+\d b)).\]
First we identify a non trivial double coset contributing to distinction when $\mu$ has conductor $\geq 2$. Note that when $E/F$ is ramified, if $\mu$ has conductor $l\geq 2$ and is trivial on $1+\P_F$, then 
it has an even conductor, because of the isomorphism $\overline{x}\mapsto 1+\w_F^d\overline{x}$ between 
$k_F=k_E$ and $\frac{1+\P_F^{2d}}{1+\P_F^{2d+1}}$ for any $d\geq 1$.

\begin{prop}\label{propositon example of non trivial double coset contribution}
Suppose $\mu$ has conductor $r+1\geq 2$ but satisfies $\mu(1+\P_F)=1$. We set $l=r$ if $E/F$ is unramified, whereas we set 
$l=(r-1)/2$ when $E/F$ is ramified. Set $d_l=\diag(\w_F^{l}I_m,I_m)$ and suppose that $\chi_{|L_0^*}=\mu_{|F^*} \circ N_{L_0/F}$, then \[\Hom_{\K\cap d_l^{-1}Hd_l}(\l_{\chi},\mu^{d_l})\neq 0,\] where $\mu^{d_l}(x)=\mu(d_l x d_l^{-1})$.
\end{prop}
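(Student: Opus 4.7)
The plan is to show that, modulo the first congruence subgroup $K^1 = 1 + \w_F \M_n(\O_F)$, both the group $\K \cap d_l^{-1} H d_l$ and the character $\mu^{d_l}$ reduce to the twisted Shalika situation of Proposition \ref{proposition characterization of Shalika model in terms of Green}, so the conclusion will follow from Prasad's theorem and the hypothesis on $\chi$. First, solving $x \in K = \GL_n(\O_F)$ together with $d_l x d_l^{-1} \in H$ shows that a general representative has the form
\[x = \begin{pmatrix} a & b' \\ \Delta \w_F^{2l} b' & a\end{pmatrix},\qquad a \in \GL_m(\O_F),\ b' \in \M_m(\O_F),\]
and the lower-left block $\Delta \w_F^{2l} b'$ lies in $\w_F \M_m(\O_F)$: in the unramified case ($l=r\geq 1$) because of $\w_F^{2l}$, in the ramified case ($l=0$) because $v_F(\Delta)=1$. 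Hence the image of $\K \cap d_l^{-1} H d_l$ in $\K/K^1$ is $F^* \cdot S_n(k_F)$, where $S_n(k_F)$ is the Shalika subgroup defined in Section \ref{section mu distinction}, and this image is all of $S_n(k_F)$ as $(a,b')$ varies.

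Next I would compute $\mu^{d_l}(x) = \mu(\det_E(a + \delta \w_F^l b'))$. Factoring $\det_E(a + \delta \w_F^l b') = \det_F(a)\,\det_E(I + \delta \w_F^l a^{-1} b')$ and using $\det_E(I + M) = 1 + \Tr_E(M) + O(M^2)$ gives
\[\det_E(a + \delta \w_F^l b') = \det_F(a)\cdot\bigl(1 + \delta \w_F^l \Tr(a^{-1} b') + z\bigr),\qquad z \in \w_E^{2r} \O_E.\]
The choice of $l$ is exactly what arranges $v_E(\delta \w_F^l)=r$ in both ramification cases, so the leading off-diagonal contribution lives in $1+\P_E^r$ while the remainder sits in $1+\P_E^{r+1}$, where $\mu$ is trivial by the conductor hypothesis. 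The assumption $\mu(1+\P_F)=1$ lets $\mu_{|F^*}$ descend to a character $\alpha$ of $k_F^*$, and forces the additive character $\psi_\mu$ of $k_E$ (defined on the last step of the filtration of $\mu$ by $\mu(1+\w_E^r\xi)=\psi_\mu(\bar\xi)$) to be trivial on the subgroup $k_F\subset k_E$; the formula $\psi(\bar t):=\psi_\mu(\overline{\delta\w_F^l/\w_E^r}\cdot\bar t)$ then defines a nontrivial additive character of $k_F$ (in the unramified case this uses $k_E = k_F\oplus \bar\delta k_F$ with $\bar\delta\neq 0$, while in the ramified case $k_E = k_F$ and $\overline{\delta\w_F^l/\w_E^r}=1$). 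Collecting terms, $\mu^{d_l}(x) = \alpha(\det\bar a)\,\psi(\Tr(\bar a^{-1}\bar b'))$, which is exactly the twisted Shalika character $\Psi_\alpha$. An entirely parallel computation shows $\mu^{d_l}$ is trivial on $K^1\cap d_l^{-1}Hd_l$ (the $a$-factor now sits in $1+\P_F$ where $\mu$ is trivial, and the $b'$-contribution is pushed one more step down the filtration), so $\mu^{d_l}$ descends cleanly to $\Psi_\alpha$ on $S_n(k_F)$.

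Since $\lambda_\chi$ itself descends modulo $K^1$ to $\chi_{|F^*}\otimes\bar\pi_{\bar\chi}$, the sought $\Hom$-space identifies with $\Hom_{F^*\cdot S_n(k_F)}(\chi_{|F^*}\otimes\bar\pi_{\bar\chi},\Psi_\alpha)$. The central character match $\chi_{|F^*} = \mu_{|F^*}^m = \alpha^m$ on $k_F^*$ follows by restricting the hypothesis $\chi_{|L_0^*} = \mu_{|F^*}\circ N_{L_0/F}$ to $F^*$. That same hypothesis, reduced modulo $\P_{L_0}$ and using that $L_0/F$ is unramified, becomes $\bar\chi_{|k_{L_0}^*} = \alpha\circ N_{k_{L_0}/k_F}$, which by Proposition \ref{proposition characterization of Shalika model in terms of Green} is exactly the condition for $\bar\pi_{\bar\chi}$ to carry a nonzero $\alpha$-twisted Shalika functional. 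Pulling back such a functional through $\lambda_\chi\twoheadrightarrow\bar\pi_{\bar\chi}$ produces the desired nonzero element of $\Hom_{\K\cap d_l^{-1}Hd_l}(\lambda_\chi,\mu^{d_l})$.

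The main obstacle is the valuation bookkeeping in the second step: the precise values $l=r$ and $l=(r-1)/2$ are dictated by the requirement $v_E(\delta\w_F^l)=r$, which is what places the leading term of $\det_E(I+\delta\w_F^l a^{-1}b')$ at the bottom step of the filtration of $\mu$ (where it is detected by $\psi_\mu$) while pushing every correction --- and every element of $K^1\cap d_l^{-1}Hd_l$ --- into $1+\P_E^{r+1}$ where $\mu$ vanishes. Any other choice of $l$ would either trivialize the descent of $\mu^{d_l}$ or prevent it from descending at all.
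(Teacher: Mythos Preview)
Your proof is correct and follows essentially the same approach as the paper: reduce to $K$ via the central character, identify the image of $K\cap d_l^{-1}Hd_l$ in $\GL_n(k_F)$ with the Shalika subgroup, compute that $\mu^{d_l}$ descends to the twisted Shalika character $\Psi_\alpha$ with $\alpha=\overline{\mu_{|F^*}}$, and invoke Proposition~\ref{proposition characterization of Shalika model in terms of Green}. Your valuation bookkeeping (in particular the observation $v_E(\delta\w_F^l)=r$ in both cases) is exactly the mechanism behind the paper's computation, and you are somewhat more explicit than the paper about why the resulting additive character of $k_F$ is nontrivial.

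One small slip: in the sentence ``in the ramified case ($l=0$) because $v_F(\Delta)=1$'' you seem to assert $l=0$, but of course $l=(r-1)/2$ can be any nonnegative integer. Your argument is unaffected --- the lower-left block has $v_F$-valuation $2l+1\geq 1$ regardless --- so presumably you meant ``even when $l=0$''. Also, the phrase ``$\chi_{|F^*}=\mu_{|F^*}^m=\alpha^m$ on $k_F^*$'' conflates characters of $F^*$ and of $k_F^*$; the equality $\chi_{|F^*}=\mu_{|F^*}^m$ holds on all of $F^*$ (this is what handles the center), and its reduction is what matches the central character of $\Psi_\alpha$. Neither point is a genuine gap.
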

\begin{proof}
First the condition $\chi_{|L_0^*}=\mu_{|F^*} \circ N_{L_0/F}$ implies that $\chi_{|F^*}=\mu_{|F^*}^m$, hence  
\[\Hom_{\K\cap d_l^{-1}H d_l}(\l_{\chi},\mu^{d_l})\simeq \Hom_{K\cap d_l^{-1}H d_l}(\l_{\chi},\mu^{d_l}).\]
The group $K\cap d_l^{-1}H d_l$ is the set of matrices \[\begin{pmatrix} a & \w_F^{-l}  b \\ \w_F^{l} \D b & a\end{pmatrix}\] with 
$a\in \GL_m(\O_F)$ and $b\in \M_m(\P_F^l)$, and 
\[\mu^{d_l}\begin{pmatrix} a & \w_F^{-l} b \\ \w_F^{l} \D b & a\end{pmatrix}=\mu(\det(a + \d b)).\] But 
\[\det(a +\d b)=\det(a)\det(I_m +\d a^{-1}b))= \det(a)(1+\Tr(\d a^{-1}b))[\M_m(\P_F^{l+1})]\] so 
\[\mu^{d_l}\begin{pmatrix} a & \w_F^{-l} b \\ \w_F^{l} \D b & a\end{pmatrix}=\mu(\det(a))\mu(1+\Tr(\d a^{-1}b)),\] where the dependences are in fact in $\overline{a}\in \GL_m(k_F)$ and $\overline{b}\in \M_m(\P_F^l/\P_F^{l+1})$. So in fact for $a\in \GL_m(\O_F)$ and $b\in \M_m(\O_F)$ we have 
 \[\mu^{d_l}\begin{pmatrix} a & b \\ \w_F^{2l} \D b & a\end{pmatrix}=\overline{\mu^{d_l}}\begin{pmatrix} \overline{a} & \overline{b} \\  & \overline{a}\end{pmatrix}=\overline{\mu_{|F^*}}(\det(a))\mu(1+\w_F^l \d \Tr(\overline{a}^{-1}\overline{b})).\]
 The character $\psi(x)=\mu(1+\w_F^l\d  \overline{x})$ is a nontrivial character of $k_F$ because $\mu(1+\P_F)=1$ 
 whereas $\mu$ has conductor $r+1$. On the other hand \[\l_\chi\begin{pmatrix} a & b \\  \w_F^{2l}\D b & a\end{pmatrix}=\overline{\pi}_{\overline{\chi}}\begin{pmatrix} \overline{a} & \overline{b} \\  & \overline{a}\end{pmatrix}.\]
 Hence $\overline{\pi}_{\overline{\chi}}$ has a $\alpha$-twisted Shalika model and the result follows from Proposition \ref{proposition characterization of Shalika model in terms of Green}.
\end{proof}

\subsection{Multiplicity one when $E/F$ is unramified}

In this section $E/F$ is ramified. We denote by $\Lambda_m^+$ the sequences of integers $(\l_1,\dots ,\l_m)$ with $\l_1\geq \dots \geq \l_m\geq 0$ in $\mathbb{Z}^m$, and set for 
$\l\in \Lambda_m^+$:
\[d_{\l}=\diag(\w_F^{\l_1},\dots,\w_F^{\l_m},1,\dots,1)\in G.\] We recall from \cite{O.04} the following result:

\begin{prop}\label{proposition double coset decomposition}
\[G=\coprod_{\l\in \Lambda_m^+} K d_{\l} H.\]
\end{prop}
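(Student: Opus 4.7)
The strategy is to translate $K\backslash G/H$ into a problem about lattices and then into a finite-order problem about a quotient module. Via $gK \mapsto g\O_F^n$, the coset space $G/K$ identifies with the set of $\O_F$-lattices of $F^n \simeq E^m$, so $K\backslash G/H$ is in bijection with $H = \GL_m(E)$-orbits on $\O_F$-lattices in $E^m$. To each such $L$ I would attach two canonical $\O_E$-lattices,
\[L^\sharp := L \cap \d^{-1} L, \qquad L^\flat := L + \d L.\]
Because $E/F$ is unramified we may take $\d$ a unit, so $\D = \d^2 \in \O_F^\times$ and $L^\sharp,L^\flat$ are genuinely $\O_E$-stable; in fact $L^\sharp$ is the largest $\O_E$-sublattice of $L$ and $L^\flat$ the smallest $\O_E$-overlattice. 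The construction is $H$-equivariant, and from $L \cap \d L = \d L^\sharp$ one obtains the internal decomposition $L^\flat/L^\sharp = L/L^\sharp \oplus \d(L/L^\sharp)$.

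Applying the $\O_E$-Cartan decomposition for $\GL_m(E)$ to the pair of $\O_E$-lattices $L^\sharp \subseteq L^\flat$ yields $h \in H$ and a unique $\l \in \Lambda_m^+$ with $hL^\flat = \O_E^m$ and $hL^\sharp = \bigoplus_i \w_E^{\l_i} \O_E e_i$. Replacing $L$ by $hL$, I may assume $(L^\sharp,L^\flat)$ is in this standard form. A direct verification (using $\w_E = \w_F$) shows that the standard candidate $d_\l L_0 = \bigoplus_i (\O_F + \w_F^{\l_i} \O_F \d)e_i$ also has $(d_\l L_0)^\sharp = L^\sharp$ and $(d_\l L_0)^\flat = L^\flat$, so matters reduce to showing that the parahoric $P := \mathrm{Stab}_H(L^\sharp,L^\flat)$ acts transitively on $\O_F$-lattices $L$ with this prescribed pair. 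Passing to $Q := L^\flat/L^\sharp = \bigoplus \O_E/\w_E^{\l_i}$, this becomes the statement that $P$ acts transitively on $\O_F$-submodules $\bar L \subseteq Q$ satisfying $\bar L \oplus \d\bar L = Q$. Grouping the $\l_i$ by equal value reduces this to the constant-$\l$ case, namely showing $\GL_m(\O_E/\w_E^a)$ acts transitively on such $\bar L \subseteq (\O_E/\w_E^a)^m$; one checks that any $\bar L$ admits an $\O_F$-generating family $(v_1,\dots,v_m)$ whose reduction mod $\w_E$ is a $k_F$-basis of $k_E^m$ complementary to its $\d$-translate, hence by Nakayama an $\O_E/\w_E^a$-basis, at which point transitivity on ordered bases closes the argument.

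Uniqueness of the representative in $\Lambda_m^+$ is automatic from the fact that $(L^\sharp,L^\flat)$, and hence the elementary divisors $\l$, are $H$-invariants of $L$. The main technical obstacle is the transitivity step in the preceding paragraph; it is precisely where the unramified hypothesis is used, through $\D \in \O_F^\times$ and $\w_E = \w_F$, which together make $L^\sharp$ and $L^\flat$ well-defined $\O_E$-lattices and allow the Nakayama reduction on $Q$ to treat each block $(\O_E/\w_E^a)^m$ as a free $\O_E/\w_E^a$-module.
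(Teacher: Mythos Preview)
Your approach is correct and genuinely different from the paper's. The paper does not argue with lattices at all: it uses the map $p:g\mapsto gAg^{-1}$ to identify $G/H$ with the $G$-conjugacy class of $A$ (citing \cite{G.97}), computes $p(d_\l)=\begin{pmatrix} & \w_F^{\l}\\ \Delta \w_F^{-\l} & \end{pmatrix}$, and then invokes \cite[Proposition~4]{O.04}, which already provides the $K$-orbit decomposition of that conjugacy class. So the paper's proof is essentially a reduction to a result in the literature, whereas yours is self-contained and more conceptual: the invariant $(L^\sharp,L^\flat)$ makes the parametrisation by $\Lambda_m^+$ transparent and explains intrinsically why the unramified hypothesis matters.

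Two small points are worth tightening. First, the ``grouping the $\l_i$ by equal value'' step is more delicate than it sounds, since $\bar L$ need not respect the block decomposition of $Q$; it is cleaner to bypass it entirely. Once you know $\bar L\cap\w_F^kQ=\w_F^k\bar L$ for every $k$ (your argument for $k=1$ iterates), the elementary divisors of $\bar L$ as an $\O_F$-module are forced to be $(\l_1,\dots,\l_m)$. Choosing $\O_F$-generators $v_1,\dots,v_m$ of $\bar L$ with $\mathrm{ann}_{\O_F}(v_i)=\w_F^{\l_i}\O_F$, one checks $\mathrm{ann}_{\O_E}(v_i)=\w_E^{\l_i}\O_E$ in $Q$, so $e_i\mapsto v_i$ defines an $\O_E$-endomorphism of $Q$; it is surjective because the $v_i$ and $\d v_i$ generate $Q$ over $\O_F$, hence an automorphism. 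Second, you are implicitly using that $P$ surjects onto $\mathrm{Aut}_{\O_E}(Q)$, which is standard but should be stated. With these adjustments your argument is complete, and arguably preferable to a bare citation.
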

\begin{proof}
For $\l\in \Lambda_m^+$ we set $\w_F^\l=\diag(\w_F^{\l_1},\dots,\w_F^{\l_m})$, we also set 
\[w_m=\begin{pmatrix} & & 1 \\ &\udots & \\ 1 & &\end{pmatrix}\in \GL_m(F)\] and 
$w=\diag(I_m,w_m)$. It follows from \cite{G.97} that the map $p:x\mapsto xAx^{-1}$ identifies $G/H$ with the conjugacy class of 
$A$. The matrix 
$d_{\l}$ is sent by $p$ to $\begin{pmatrix} & \w_F^{\l}\\ \Delta \w_F^{-\l} & \end{pmatrix}$, the result now follows from 
\cite[Proposition 4]{O.04}, noting that the group $H$ in \cite{O.04} is equal the centralizer of $wAw^{-1}$ whereas here it is the centralizer of $A$. 
\end{proof}

One has the following multiplicity one result:

\begin{prop}\label{proposition multiplicity one unramified}
Let $\pi(\chi)$ be a cuspidal representation of $\GL_n(F)$ of depth zero for $n\geq 4$, and suppose that $E/F$ is unramified. If it is $\mu$-distinguished, then 
$\Hom_{\GL_m(E)}(\pi(\chi),\mu)\simeq \C$.
\end{prop}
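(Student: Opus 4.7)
The plan is to combine Mackey theory with a careful residual analysis. By Mackey's restriction formula and Frobenius reciprocity, together with the parametrization of double cosets from Proposition \ref{proposition double coset decomposition},
\[\Hom_H(\pi(\chi),\mu)\simeq\bigoplus_{\l\in\Lambda_m^+}\Hom_{\K\cap d_\l^{-1}Hd_\l}(\l_\chi,\mu^{d_\l}),\]
with $\mu^{d_\l}(x):=\mu(d_\l xd_\l^{-1})$. The tame case being already covered by Theorem \ref{theorem distinction when mu is tame}, I assume $c(\mu)=r+1\geq 2$; Corollaries \ref{corollary mu's restriction is tame} and \ref{corollary distinguished determines restriction to L0} then give that $\mu_{|F^*}$ is tame and $\chi_{|L_0^*}=\mu_{|F^*}\circ N_{L_0/F}$. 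The main task is to show that only the summand for $\l=(r,\dots,r)$ survives, and that it contributes exactly $\C$.

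For each $\l$, put $N_\l:=(\K\cap d_\l^{-1}Hd_\l)\cap(I_n+\w_F\M_n(\O_F))$. As $\l_\chi$ is trivial on $N_\l$, the $\l$-summand vanishes unless $\mu^{d_\l}_{|N_\l}=1$. Writing a typical element of the intersection as $d_\l^{-1}hd_\l$ for $h=\begin{pmatrix}\alpha&\beta\\ \D\beta&\alpha\end{pmatrix}\in H$ subject to $\alpha\in I_m+\w_F\M_m(\O_F)$ and $\beta_{ij}\in\w_F^{\l_i+1}\O_F$, and expanding $\det(\alpha+\d\beta)$ modulo $\P_E^{r+1}$ as in the proof of Proposition \ref{propositon example of non trivial double coset contribution}, one sees that if $\l_m<r$ then the contribution $\mu(1+\d\beta_{mm})$ is a nontrivial character on $N_\l$ (crucially using that $\mu_{|F^*}$ is tame but $\mu$ itself is not, so $\mu$ is nontrivial on $1+\w_F^{\l_m+1}\d\O_F$). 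This kills the summand for all $\l$ with $\l_m<r$, in particular for the trivial coset $\l=0$.

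When $\l_m\geq r$, the character $\mu^{d_\l}$ does descend to a character $\overline{\mu^{d_\l}}$ of the residual quotient $\overline{\K\cap d_\l^{-1}Hd_\l}\subseteq\GL_n(k_F)$, whose block shape is explicitly dictated by the partition cut out by $\l$. For $\l=(l',\dots,l')$ constant with $l'>r$, $\overline{\mu^{d_\l}}$ reduces to $\overline{\mu_{|F^*}}\circ\det$ on the Shalika subgroup $S_n(k_F)$ and is trivial on the unipotent radical of the $(m,m)$-parabolic of $\GL_n(k_F)$, so cuspidality of $\overline{\pi}_{\overline{\chi}}$ forces the summand to vanish. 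For $\l$ non-constant with $\l_m\geq r$, an analogous though more delicate analysis identifies the residual character as descending through the Levi quotient of a proper parabolic of $\GL_n(k_F)$ (attached to the block partition from $\l$), so cuspidality again yields vanishing.

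Only $\l=(r,\dots,r)$ survives, and for this tuple Proposition \ref{propositon example of non trivial double coset contribution} identifies the associated summand with $\Hom_{S_n(k_F)}(\overline{\pi}_{\overline{\chi}},\Psi_{\overline{\mu_{|F^*}}})$, which Proposition \ref{proposition characterization of Shalika model in terms of Green}---combined with $\chi_{|L_0^*}=\mu_{|F^*}\circ N_{L_0/F}$---shows is exactly one-dimensional, completing the proof. The most delicate step is the case analysis for non-constant $\l$ with $\l_m\geq r$: one has to exhibit explicitly a unipotent radical of a proper parabolic of $\GL_n(k_F)$ sitting inside the residual group on which $\overline{\mu^{d_\l}}$ is trivial, which requires a careful bookkeeping of how $\det(\alpha+\d\beta)$ factors along the $\l$-block structure.
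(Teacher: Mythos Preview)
Your overall strategy matches the paper's: decompose via Mackey theory over the cosets $d_\l$, rule out all $\l\neq(r,\dots,r)$, and invoke Proposition~\ref{proposition characterization of Shalika model in terms of Green} for the surviving one. The steps for $\l_m<r$ and for $\l_m>r$ are correct and essentially identical to the paper's.

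The gap is in the case $\l_m=r$ with $\l$ non-constant. Your claim that the residual character ``descends through the Levi quotient of a proper parabolic of $\GL_n(k_F)$'' is not correct as stated. For any $\l$ with all $\l_i\geq 1$, the image of $K\cap d_\l^{-1}Hd_\l$ in $\GL_n(k_F)$ is the full Shalika subgroup $S_n(k_F)$, independently of the block pattern of $\l$; and when $\l_m=r$ the reduced character on the unipotent radical $N=\{n(\overline b)\}$ is $\overline b\mapsto\psi(\Tr(D\overline b))$ with $D=\diag(0_{s-1},I_{m-s+1})$, where $s$ is the first index with $\l_s=\l_m$. This character is \emph{degenerate but nontrivial} whenever $s>1$, so it does not factor through the Levi of the $(m,m)$-parabolic, and there is no other unipotent radical of a proper parabolic of $\GL_n(k_F)$ contained in $S_n(k_F)$ to fall back on. Cuspidality alone therefore does not kill this contribution.

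The paper closes this case by a different mechanism: equivariance under the diagonal Levi $\diag(a,a)$ forces the character $\overline b\mapsto\psi(\Tr(D\overline b))$ to be invariant under $\overline b\mapsto a\overline b a^{-1}$ for all $a\in\GL_m(k_F)$, hence $D$ must be central, i.e.\ $s=1$ and $\l$ is constant. You should replace your ``descent to the Levi'' claim by this conjugation-invariance argument.
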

\begin{proof}
Suppose that $\pi(\chi)$ is $\mu$-distinguished so that $\mu_{|F^*}=\alpha\circ N_{L_0/F}$ thanks to Corollary \ref{corollary distinguished determines restriction to L0}. The result follows from Theorem \ref{theorem distinction when mu is tame} when $\mu$ is 
tame so we suppose that $\mu$ has conductor $l+1\geq 2$. By Mackey theory, the result will follow from Propositions \ref{proposition characterization of Shalika model in terms of Green}, \ref{propositon example of non trivial double coset contribution} 
and \ref{proposition double coset decomposition} if we show that if 
\[\Hom_{\K\cap d_{\l}^{-1}Hd_{\l}}(\l_{\chi},\mu^{d_{\l}})=\Hom_{K\cap d_{\l}^{-1}Hd_{\l}}(\l_{\chi},\mu^{d_{\l}})\neq 0\] for $\l\in \Lambda_m^+$, then 
$\l=(l,\dots,l)$. Note that $K\cap d_l^{-1}H d_l$ is the set of matrices \[\begin{pmatrix} a & \w_F^{-\l}  b \\ \w_F^{\l} \D b & a\end{pmatrix}\] with 
$a\in \GL_m(\O_F)$ and $l_i(b)\in (\P_F^{\l_i})^m$ for $i=1,\dots,m$, where $l_i(b)$ is $i$-th row of $b$. So we assume that 
$\Hom_{K\cap d_{\l}^{-1}Hd_{\l}}(\l_{\chi},\mu^{d_{\l}})\neq 0$. 

Suppose first that $\l_m\leq l-1$ and denote by 
$\M_m(\O_F)^-$ the space of matrices in $\M_m(\O_F)$ with $l_i(b)=0$ for $i=1,\dots,m-1$ and $l_m(b)\in (\P_F^{\l_m+1})^m$. Because $\pi(\chi)$ is tame, if $\Hom_{K\cap d_{\l}^{-1}Hd_{\l}}(\l_{\chi},\mu^{d_{\l}})$ was nonzero this would imply that 
\[1=\mu^{d_l}\begin{pmatrix} I_m & \w_F^{-\l} b \\ \w_F^{\l} \D b & I_m\end{pmatrix}=\mu(\det(I_m + \d l_m(b)))\] for 
all $b\in \M_m(\O_F)^-$, hence that $\mu(1+\d \P_F^l)=\{1\}$. Because $\mu(1+\P_F)=\{1\}$ as well, this would in turn imply that 
$\mu(1+\P_E^l)=\mu(1+\P_F^l+\d\P_F^l)=\{1\}$, contradicting the definition of $l$, hence $\l_m\geq l$. 
Now let $s$ be the smallest integer between $1$ and $m$ such that $\l_s=\l_m$, by the arguments of Proposition \ref{propositon example of non trivial double coset contribution} we obtain that 
\[\mu^{d_\l}\begin{pmatrix} a & b \\ \w_F^{2\l} \D b & a\end{pmatrix}=\mu(\det(a))\mu(1+\Tr(\d a^{-1}\w_F^{\l}b))\] for 
$a\in\GL_m(\O_F)$ and $b\in \M_m(\O_F)$. By reduction we deduce that 
\[\mu^{d_\l}\begin{pmatrix} \overline{a} & \overline{b} \\  & \overline{a}\end{pmatrix}=\mu(\det(\overline{a}))\mu(1+\Tr(\d \overline{a}^{-1}\w_F^{\l}\overline{b}))=\mu(\det(\overline{a}))\mu(1+\w_F^{\l_m}\Tr(\d \overline{a}^{-1}\diag(0_{s-1},I_{m-s+1})\overline{b}))\] for $a\in \GL_m(k_F)$ and $b\in \M_m(k_F)$. However the identity 
\[\overline{\pi}_{\overline{\chi}}\begin{pmatrix} \overline{a} & \overline{b} \\  & \overline{a}\end{pmatrix}= \mu^{d_\l}\begin{pmatrix} \overline{a} & \overline{b} \\  & \overline{a}\end{pmatrix}\mathrm{Id}\] first implies that if $\l_m>l$ then 
the unipotent radical of type $(m,m)$ acts trivially on the space $\overline{\pi}_{\overline{\chi}}$ contradicting its cuspidality, hence $\l_m=l$. It also implies that 
\[\overline{b}\mapsto \mu(1+\d \w_F^{\l_m}\Tr(\diag(0_{s-1},I_{m-s+1})\overline{b}))\] must be invariant under conjugation by 
$\GL_m(k_F)$, which in turn implies that $s=1$ hence $\l_1=\dots=\l_m=l$
\end{proof}

\begin{rem}
A similar analysis could certainly be done when $E/F$ is ramified but we don't have at our disposal the description of the double coset representatives given by \cite{O.04} in the unramified case. As we can still prove the Prasad and Takloo-Bighash conjecture in this case, without computing the exact multiplicity, we do not pursue this direction.
\end{rem}

\subsection{Characterization of distinction of level zero cuspidal representations}

The spaces $\Hom_{\K\cap d_{l'}^{-1}Hd_{l'}}(\l_{\chi},\mu^{d_{l'}})$ are isomorphic to a subspace of $\Hom_{H}(\pi(\chi),\mu)$ thanks to Mackey theory for compact induction from open subgroups. Hence as a corollary of Theorem \ref{theorem distinction when mu is tame}, Corollary \ref{corollary distinguished determines restriction to L0}, Propositions \ref{propositon example of non trivial double coset contribution} and \ref{proposition multiplicity one unramified}, we deduce the all assertions of the following theorem.

\begin{thm}\label{theorem characterization of distinction}
For $n\geq 4$, the depth-zero cuspidal representation $\pi(\chi)$ of $\GL_n(F)$ is $\mu$-distinguished if and only if $\chi_{|L_0^*}=\mu_{|F^*} \circ N_{L_0/F}$, except when $E/F$ is ramified and $\mu$ is tame, in which case $\pi(\chi)$ is never $\mu$-distinguished. When $\mu$ is tame or $E/F$ is unramified, the dimension of $\Hom_H(\pi(\chi),\mu)$ is one when nonzero.
\end{thm}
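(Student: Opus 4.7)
The plan is to assemble the theorem from the four cited ingredients via a case split on whether $\mu$ is tame.

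First I would dispose of the tame case directly. Theorem \ref{theorem distinction when mu is tame} already asserts the full biconditional (distinction is equivalent to $E/F$ unramified plus $\chi_{|L_0^*}=\mu_{|F^*}\circ N_{L_0/F}$), the non-distinction statement when $E/F$ is ramified, and multiplicity one, all under the hypothesis that $\mu$ is tame. So nothing further need be said in that situation.

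When $\mu$ is not tame, the necessity direction is immediate from Corollary \ref{corollary distinguished determines restriction to L0}, which was proved without any tameness hypothesis on $\mu$ and yields exactly $\chi_{|L_0^*}=\mu_{|F^*}\circ N_{L_0/F}$. For sufficiency I would run the Mackey decomposition of Lemma \ref{lemma HM}, so that $\Hom_H(\pi(\chi),\mu)$ decomposes as a product of local pieces $\Hom_{H\cap g\K g^{-1}}({}^g\lambda_\chi,\mu)$ indexed by $H$-orbits of $\theta$-stable vertices of $X_G$. Under the hypothesis $\chi_{|L_0^*}=\mu_{|F^*}\circ N_{L_0/F}$ Proposition \ref{propositon example of non trivial double coset contribution} then produces a nonzero contribution coming from the double coset of $d_l$, hence a nonzero element in the full Hom space. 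Here the only bookkeeping point is that the hypothesis of Proposition \ref{propositon example of non trivial double coset contribution} that $\mu_{|F^*}$ be tame is automatic: since $\chi$ is tame and $N_{L_0/F}$ sends $1+\P_{L_0}$ onto $1+\P_F$ (as $L_0/F$ is unramified), the identity $\chi_{|L_0^*}=\mu_{|F^*}\circ N_{L_0/F}$ forces $\mu_{|F^*}$ to be trivial on $1+\P_F$.

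Finally, the multiplicity-one assertion when $E/F$ is unramified (covering both the tame and non-tame subcases) is exactly Proposition \ref{proposition multiplicity one unramified}, while for tame $\mu$ the bound is already contained in Theorem \ref{theorem distinction when mu is tame}. There is no serious obstacle in this proof; the main task is the case analysis and the brief verification of the tameness of $\mu_{|F^*}$ just mentioned.
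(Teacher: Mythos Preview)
Your overall structure is right, but there is a real slip in the sufficiency argument for non-tame $\mu$. You invoke Lemma \ref{lemma HM} and say the Hom-space is a product indexed by $H$-orbits of $\theta$-stable vertices. That lemma, however, is obtained from the full Mackey product by discarding all non-$\theta$-stable orbits via Lemma \ref{HM}, and Lemma \ref{HM} is proved only under the hypothesis that $\mu$ is tame. In the non-tame situation you are precisely in, that reduction is not available. Worse, the vertex $d_l\cdot s_0$ you want to use is \emph{not} $\theta$-stable for $l\geq 1$ (a short computation gives $d_l^{-1}A d_l A^{-1}=\diag(\varpi_F^{-l}I_m,\varpi_F^{l}I_m)\notin\K$), so the very double coset whose contribution you need would have been thrown away by the product you wrote down. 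In the ramified case this is even more visible: by Proposition \ref{proposition stable vertices} there are no $\theta$-stable vertices at all, so your product is empty and would force $\Hom_H(\pi(\chi),\mu)=0$, the opposite of what you want.

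The repair is easy and is exactly what the paper does: use the full Mackey decomposition over all of $H\backslash G/\K$ (the third displayed isomorphism in the proof of Lemma \ref{lemma HM}, before Lemma \ref{HM} is applied). Each local piece $\Hom_{H\cap g\K g^{-1}}({}^g\lambda_\chi,\mu)$ then sits as a direct factor, in particular as a subspace, of $\Hom_H(\pi(\chi),\mu)$. Proposition \ref{propositon example of non trivial double coset contribution} makes the $d_l$-piece nonzero, and you are done. Your verification that $\mu(1+\P_F)=1$ follows from $\chi_{|L_0^*}=\mu_{|F^*}\circ N_{L_0/F}$ is correct and is the small check needed to apply that proposition. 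The rest of your argument (tame case via Theorem \ref{theorem distinction when mu is tame}, necessity via Corollary \ref{corollary distinguished determines restriction to L0}, multiplicity one via Proposition \ref{proposition multiplicity one unramified}) matches the paper.
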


\section{On $\mu_{|F^*}$-symplecticity for Langlands parameters}\label{section mu simplecticity}

In this section, we will freely identify characters of the Weil group $W_K$ and characters of $K^*$ for 
any finite extension $K$ of $F$ via the Artin map. 
We fix $\alpha$ a character of $F^*$. For $\phi$ a finite dimensional irreducible representation of $W_F$, we say that 
$\phi$ is $\alpha$-\textit{selfdual} if \[\phi\simeq \alpha\otimes \phi^\vee.\] 

This is equivalent to say that there exists a non-zero bilinear form $B$ (necessarily non-degenerate) which satisfies 
\[B(\phi(w)v,\phi(w)v')=\alpha(w)B(v,v')\] for all $v, \ v'$ in $V_{\phi}$ and $w\in W_F$. By Schur's Lemma the space of such bilinear forms $B$ is one dimensional hence $B$ is either symmetric or alternate, but not both. In the first case we say that 
$\phi$ is $\alpha$-\textit{orthogonal} and in the second case we say that it is $\alpha$-\textit{symplectic}. As as we shall see later, one way to discriminate between $\alpha$-orthogonal and $\alpha$-symplectic 
tame irreducible representations is the determinant. The proof of the following lemma which we present here is that of the referee, strictly speaking we only need its $m=1$ case to obtain Theorem \ref{theorem dichotomy for tame reps} which is the main result of the section.

\begin{lemma}\label{lemma determinant}
Let $\phi$ be an irreducible representation of $W_F$ of dimension $n$ which is $\alpha$-symplectic, then $\det(\phi)=\alpha^m.$ Moreover 
if $m=1$ the converse is true. 
\end{lemma}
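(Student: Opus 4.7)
The plan is to extract $\det(\phi)$ from the equivariance equation satisfied by the alternating form, using the Pfaffian as the natural invariant for the symplectic setting, and then to handle the converse in dimension two by exhibiting the canonical alternating form on $\mathbb C^2$.

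For the forward direction, fix a nondegenerate alternating bilinear form $B$ on $V_\phi$ witnessing $\alpha$-symplecticity, and let $M \in \Mat_{2m}(\C)$ be its matrix in a chosen basis, so that $M$ is antisymmetric and the equivariance $B(\phi(w)v,\phi(w)v')=\alpha(w)B(v,v')$ reads
\[ \phi(w)^T M \phi(w) = \alpha(w)\, M \qquad (w \in W_F). \]
I would then apply the Pfaffian to both sides: on the left $\mathrm{Pf}(\phi(w)^T M \phi(w)) = \det(\phi(w))\,\mathrm{Pf}(M)$, and on the right $\mathrm{Pf}(\alpha(w)M) = \alpha(w)^m\,\mathrm{Pf}(M)$ since $M$ has size $2m \times 2m$. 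Dividing by the nonzero scalar $\mathrm{Pf}(M)$ (which is nonzero because $M$ is nondegenerate alternating) yields $\det(\phi)=\alpha^m$.

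For the converse when $m=1$, suppose $\phi$ is an irreducible two-dimensional representation of $W_F$ with $\det(\phi)=\alpha$. On $V_\phi \simeq \C^2$ consider the canonical alternating form $B_0(v,v') = \det(v \mid v')$, viewing $(v\mid v')$ as the $2\times 2$ matrix with columns $v,v'$. For $g \in \GL_2(\C)$ one has $B_0(gv,gv') = \det(g)\,B_0(v,v')$ directly from multiplicativity of the determinant, so $B_0$ is $\det(\phi)=\alpha$-equivariant. Since $B_0$ is nondegenerate and alternating, it exhibits $\phi$ as $\alpha$-symplectic; note in particular that the existence of such a nonzero equivariant form automatically forces $\phi \simeq \alpha \otimes \phi^\vee$, so no a priori $\alpha$-selfduality assumption is needed.

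There is no real obstacle here; the only subtle point is remembering the scaling property $\mathrm{Pf}(cA) = c^m\,\mathrm{Pf}(A)$ for $A$ of size $2m$, which is precisely what makes the integer $m$ (rather than $n=2m$) appear in the exponent and breaks the ambiguity $\det(\phi)^2=\alpha^{2m}$ coming from merely taking determinants of the equivariance equation.
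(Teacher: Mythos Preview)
Your argument is correct and is essentially the same as the paper's: the paper views $B$ as an element of $\bigwedge^2 V^*$ and observes that $\bigwedge^m B$ is a nonzero vector in the line $\bigwedge^n V^*$ on which $W_F$ acts simultaneously by $\det(\phi)$ and by $\alpha^m$, which is precisely the coordinate-free version of your Pfaffian identity $\mathrm{Pf}(\phi(w)^T M \phi(w))=\det(\phi(w))\,\mathrm{Pf}(M)=\alpha(w)^m\,\mathrm{Pf}(M)$. The converse for $m=1$ is likewise the same observation read backwards, your explicit $B_0(v,v')=\det(v\mid v')$ being a concrete incarnation of the canonical generator of $\bigwedge^2 V^*$.
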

\begin{proof}
Let $B$ be the nonzero alternate form on the space $V$ of $\phi$ with respect to which $\phi$ is $\alpha$-symplectic. View $B$ as an element of the 
exterior product $\bigwedge^2 V^*$ (where $V^*$is the dual of $V$), because $B$ is non degenerate the vector $\bigwedge^m B$ is a nonzero element of the line $\bigwedge^n V^*$ on which $W_F$ acts at the same time by $\det(\phi)$ and $\alpha^m$. The converse when $m=1$ is also clear now. 
\end{proof}
 
We recall that irreducible representations of the form $\Ind_{W_L}^{W_F}(\chi')$ with $\chi'(1+\P_L)=1$ are exactly the \textit{tame} $n$-dimensional irreducible representation of $W_F$, i.e. those with trivial restriction to the wild inertia subgroup of $W_F$. Characterizing $\alpha$-selfdual tame irreducible representations of $W_F$ is easy for $n\geq 4$.

\begin{lemma}\label{lemma selfdual tame reps}
Let $\phi=Ind_{W_L}^{W_F}(\chi')$ be a tame irreducible representation of $W_F$ of dimension $n\geq 4$. Then $\phi$ is $\alpha$-selfdual if and only if $\chi'\circ N_{L/L_0}=\alpha\circ N_{L/F}$, i.e. if and only if 
$\chi'_{|L_0^*}=\alpha\circ N_{L_0/F}$ or $\chi'_{|L_0^*}=\alpha\circ N_{L_0/F}.\eta_{L/L_0}$ where $\eta_{L/L_0}$ is the unramified quadratic character of $L_0^*$. 
\end{lemma}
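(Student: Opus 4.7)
My plan is to translate $\alpha$-selfduality of $\phi$ into an equation between $\chi'$ and one of its Galois conjugates, and then determine which Galois element can appear. By the projection formula, $\alpha\otimes\phi^\vee=\Ind_{W_L}^{W_F}\bigl((\alpha\circ N_{L/F})\chi'^{-1}\bigr)$. Since $\chi'$ is regular (this is Mackey's irreducibility criterion applied to $\phi$), two characters of $W_L$ have isomorphic inductions to $W_F$ if and only if they lie in the same $\Gal_F(L)$-orbit. Hence $\phi$ is $\alpha$-selfdual if and only if there exists $\gamma\in\Gal_F(L)$ with
\[\chi'\cdot\chi'^{\gamma}=\alpha\circ N_{L/F}. \qquad (\star)\]
Applying $\gamma$ to $(\star)$ and using the $\Gal_F(L)$-invariance of $\alpha\circ N_{L/F}$ yields $\chi'^{\gamma^2}=\chi'$, so by regularity $\gamma^{2}=1$.

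The main step, where the hypothesis $n=2m\geq 4$ (together with $q_F$ odd) is used, will be to rule out $\gamma=1$. In that case $\chi'^{2}=\alpha\circ N_{L/F}$. Restricting to $1+\P_L$, and using that $N_{L/F}\colon 1+\P_L\twoheadrightarrow 1+\P_F$ is surjective since $L/F$ is unramified, I deduce that $\alpha$ is tame. Reducing mod $\P_L$ and $\P_F$ then gives $\bar\chi'^{\,2}=\bar\alpha\circ N_{k_L/k_F}$, whence $\bar\chi'^{\,2(q_F-1)}=1$. Because $q_F$ is odd, $2(q_F-1)$ divides $q_F^{2}-1$, so $\bar\chi'$ is fixed by the squared Frobenius of $k_L/k_F$; but $2$ is a proper divisor of $n=2m$, contradicting the regularity of $\bar\chi'$.

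Consequently $\gamma$ is the nontrivial element of $\Gal(L/L_0)$, and $(\star)$ reads
\[(\chi'_{|L_0^{*}})\circ N_{L/L_0}=(\alpha\circ N_{L_0/F})\circ N_{L/L_0}\]
as characters of $L^{*}$. Thus $\chi'_{|L_0^{*}}\cdot(\alpha\circ N_{L_0/F})^{-1}$ is a character of $L_0^{*}$ trivial on $N_{L/L_0}(L^{*})$, and by local class field theory the group of such characters has order $[L:L_0]=2$ and is generated by the unramified quadratic character $\eta_{L/L_0}$. This yields the two cases in the statement. The converse is immediate: from either of the two options for $\chi'_{|L_0^{*}}$ one recovers $(\star)$ with $\gamma$ the nontrivial element of $\Gal(L/L_0)$ (using $\eta_{L/L_0}\circ N_{L/L_0}=1$), and hence $\phi\simeq\alpha\otimes\phi^\vee$.
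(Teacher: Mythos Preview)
Your proof is correct and follows essentially the same approach as the paper. The paper's own proof is a two-line sketch that refers back to the argument of Corollary~\ref{corollary distinguished determines restriction to L0}; your write-up is precisely the unpacking of that argument (translating $\alpha$-selfduality into $\chi'\chi'^{\gamma}=\alpha\circ N_{L/F}$, forcing $\gamma^2=1$ by regularity, and ruling out $\gamma=1$ via the divisibility $2(q_F-1)\mid q_F^2-1$), together with the routine passage from $\chi'\circ N_{L/L_0}=\alpha\circ N_{L/F}$ to the two possibilities for $\chi'_{|L_0^*}$ via local class field theory.
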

\begin{proof}
One direction is obvious. For the other if $\phi$ is $\alpha$-selfdual, then $\chi'^{\gamma}\simeq \alpha \circ N_{L/F} \chi'^{-1}$ for some $\gamma\in \Gal_F(L)$. We conclude as in the proof of Corollary \ref{corollary distinguished determines restriction to L0} that $\gamma=\sigma_{L/L_0}$ the Galois involution attached to $L/L_0$. 
\end{proof}

A less obvious task is to distinguished between $\alpha$-symplectic and $\alpha$-orthogonal representations in the statement above. The following lemma, which was given to us by the referee, turns out to be useful for this.

\begin{lemma}\label{lemma ref}
Let $K$ be a subgroup of finite index of a group $G$, $\alpha: G\rightarrow \C^*$ a character. Let $W$ be a finite dimensional representation of $K$ and 
$V=Ind_K^G(W)$. If $W$ is $\alpha_{|K}$-selfdual, resp. $\alpha_{|K}$-orthogonal, resp. $\alpha_{|K}$-symplectic, then $V$ is $\alpha$-selfdual, resp. $\alpha$-orthogonal, resp. $\alpha$-symplectic.
\end{lemma}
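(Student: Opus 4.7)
The plan is to upgrade a witness $B_W$ of the $\alpha_{|K}$-selfduality of $W$ to a non-degenerate $\alpha$-equivariant bilinear form $B_V$ on $V=\Ind_K^G(W)$ having the same symmetry type, since the existence of such a form is exactly what $\alpha$-selfduality (resp. $\alpha$-orthogonality, resp. $\alpha$-symplecticity) of $V$ means. I would model $V$ in the standard way as the space of functions $f:G\to W$ satisfying $f(kg)=W(k)f(g)$ for $k\in K$, $g\in G$, with $G$ acting by right translation $(hf)(g)=f(gh)$.

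The formula I would use is
\[B_V(f,f')=\sum_{g\in K\backslash G}\alpha(g)^{-1}B_W(f(g),f'(g)),\]
the sum being finite because $[G:K]<\infty$. The first thing to check is that each summand depends only on the class $Kg$: replacing $g$ by $kg$ multiplies $B_W(f(g),f'(g))$ by $\alpha(k)$ thanks to the $\alpha_{|K}$-selfduality of $B_W$, while it multiplies $\alpha(g)^{-1}$ by $\alpha(k)^{-1}$, so the two factors cancel. The second thing is $G$-equivariance: for $h\in G$, the substitution $g'=gh$ yields
\[B_V(hf,hf')=\sum_{g\in K\backslash G}\alpha(g)^{-1}B_W(f(gh),f'(gh))=\alpha(h)\sum_{g'\in K\backslash G}\alpha(g')^{-1}B_W(f(g'),f'(g'))=\alpha(h)B_V(f,f'),\]
which is the desired $\alpha$-selfduality relation on $V$. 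The symmetric (resp. alternating) character of $B_V$ is then manifestly inherited termwise from $B_W$, so $B_V$ has the same symmetry type, which will give the refined orthogonal/symplectic statements once non-degeneracy is in hand.

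For non-degeneracy I would argue by hand: given a nonzero $f\in V$, pick a coset representative $g_0$ with $f(g_0)\neq 0$, use non-degeneracy of $B_W$ to produce $w_0\in W$ with $B_W(f(g_0),w_0)\neq 0$, and let $f'\in V$ be the unique element supported on the single coset $Kg_0$ and taking the value $w_0$ at $g_0$. Then only the $g_0$-summand survives and $B_V(f,f')=\alpha(g_0)^{-1}B_W(f(g_0),w_0)\neq 0$. The only real work is the careful bookkeeping of the $\alpha$-twist in the two verifications above; once that is done, the three cases of the lemma are handled simultaneously, so I don't anticipate any genuine obstacle.
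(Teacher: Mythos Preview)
Your proof is correct and follows exactly the same construction as the paper: the same averaged bilinear form $\widetilde{B}(f,f')=\sum_{g\in K\backslash G}\alpha(g)^{-1}B(f(g),f'(g))$ and the same non-degeneracy witness supported on a single coset. You have simply spelled out the well-definedness and $\alpha$-equivariance checks that the paper leaves implicit.
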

\begin{proof}
Take $B$ a non-degenerate bilinear form on $W$ such that $B(k.w,k.w')=\alpha(k)B(w,w')$ for $(k,w,w')\in K\times W \times W$. Then the bilinear form defined on $V\times V$ by 
\[\widetilde{B}(f,f')=\sum_{g\in K\backslash G} \alpha^{-1}(g)B(f(g),f'(g))\] is non-degenerate (because for each $g_0\in G$ and $w\in W$ there is a function $f_{g_0,w} \in Ind_K^G(W)$ supported on $Kg_0$ such that $f_{g_0,w}(g_0)=w$), $\alpha$-equivariant on $V\times V$, of the same type as $B$.
\end{proof}

Lemmas \ref{lemma determinant}, \ref{lemma selfdual tame reps} and \ref{lemma ref} imply the following theorem (when $\alpha$ is trivial similar arguments are used in \cite[Section 6.1]{BHS.17} where Lemma \ref{lemma ref} is tacitly used). We thank the referee for the simplifications 
provided in its proof. 

\begin{thm}\label{theorem dichotomy for tame reps}
Let $\phi=Ind_{W_L}^{W_F}(\chi')$ be a tame irreducible representation of $W_F$ of dimension $n\geq 4$. Then $\phi$ is $\alpha$-symplectic if and only if $\chi'_{|L_0^*}=\alpha\circ N_{L_0/F}.\eta_{L/L_0}$.
\end{thm}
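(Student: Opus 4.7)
The plan is to reduce the symplectic/orthogonal dichotomy for the $n$-dimensional representation $\phi$ on $W_F$ to that of a two-dimensional representation on $W_{L_0}$, where the sharp $m=1$ case of Lemma \ref{lemma determinant} applies directly. Concretely, set $\psi := \Ind_{W_L}^{W_{L_0}}(\chi')$, so that by transitivity of induction $\phi = \Ind_{W_{L_0}}^{W_F}(\psi)$. Since $\chi'$ is regular (encoded in the irreducibility hypothesis on $\phi$), one has $\chi' \neq \chi'^{\sigma_{L/L_0}}$, hence $\psi$ is an irreducible two-dimensional representation of $W_{L_0}$.

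Now suppose $\phi$ is $\alpha$-selfdual, so that by the computation inside the proof of Lemma \ref{lemma selfdual tame reps} one has $\chi' \cdot \chi'^{\sigma_{L/L_0}} = \alpha \circ N_{L/F} = (\alpha \circ N_{L_0/F}) \circ N_{L/L_0}$. The standard criterion for two-dimensional induced representations then forces $\psi$ to be $(\alpha \circ N_{L_0/F})$-selfdual. Next I invoke Lemma \ref{lemma ref} with $G = W_F$ and $K = W_{L_0}$: if $\psi$ is $(\alpha \circ N_{L_0/F})$-symplectic (resp.\ orthogonal), then $\phi$ is $\alpha$-symplectic (resp.\ orthogonal). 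Because the space of $\alpha$-equivariant bilinear forms on the irreducible $\phi$ is one-dimensional, the type of $\phi$ is determined by that of $\psi$, and one only needs the forward direction of Lemma \ref{lemma ref}.

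It remains to decide which type $\psi$ is, for which I use the $m=1$ case of Lemma \ref{lemma determinant}: the irreducible $(\alpha \circ N_{L_0/F})$-selfdual representation $\psi$ is symplectic if and only if $\det \psi = \alpha \circ N_{L_0/F}$. The determinant of an index-two induction is given by the well-known formula $\det(\psi) = (\chi'|_{L_0^*}) \cdot \omega_{L/L_0}$, and since $L/L_0$ is unramified quadratic one has $\omega_{L/L_0} = \eta_{L/L_0}$. Matching against the two alternatives provided by Lemma \ref{lemma selfdual tame reps}: the case $\chi'|_{L_0^*} = \alpha \circ N_{L_0/F}$ yields $\det \psi = (\alpha \circ N_{L_0/F}) \eta_{L/L_0} \neq \alpha \circ N_{L_0/F}$, so $\psi$, and hence $\phi$, is orthogonal; whereas $\chi'|_{L_0^*} = \alpha \circ N_{L_0/F} \cdot \eta_{L/L_0}$ yields $\det \psi = \alpha \circ N_{L_0/F}$, so $\psi$, and hence $\phi$, is symplectic. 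The only even mildly delicate step is the determinant formula for an index-two induced representation; everything else is a direct assembly of Lemmas \ref{lemma determinant}, \ref{lemma selfdual tame reps}, and \ref{lemma ref}.
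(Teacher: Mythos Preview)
Your proof is correct and follows essentially the same route as the paper's: both arguments reduce to the two-dimensional representation $\psi=\Ind_{W_L}^{W_{L_0}}(\chi')$ (called $\phi_0$ in the paper), apply the $m=1$ case of Lemma~\ref{lemma determinant} together with the determinant formula $\det(\psi)=\chi'_{|L_0^*}\cdot\eta_{L/L_0}$ for an index-two induction, and then use Lemma~\ref{lemma ref} to transport the symplectic/orthogonal type from $\psi$ to $\phi$. The only cosmetic difference is that you treat the two cases of Lemma~\ref{lemma selfdual tame reps} directly, whereas the paper phrases the converse as a contradiction.
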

\begin{proof}
Let's first have a look at $\phi_0:=Ind_{W_L}^{W_{L_0}}(\chi'_{|L_0^*})$. According to Lemma \ref{lemma determinant} the representation $\phi_0$ is $\alpha\circ N_{L_0/F}$-symplectic if and only if its determinant is equal to $\alpha\circ N_{L_0/F}$. This is well-known to be the same as $\chi'_{|L_0^*}=\alpha\circ N_{L_0/F}.\eta_{L/L_0}$ (see for example \cite[29.2 Proposition]{BH.06}). In particular if $\chi'_{|L_0^*}=\alpha\circ N_{L_0/F}.\eta_{L/L_0}$ then $\phi$ is $\alpha$-symplectic thanks to Lemma \ref{lemma ref}. 
Conversely if $\phi$ is $\alpha$-symplectic then by Lemma \ref{lemma selfdual tame reps} we have $\chi'_{|L_0^*}=\alpha\circ N_{L_0/F}$ or $\chi'_{|L_0^*}=\alpha\circ N_{L_0/F}.\eta_{L/L_0}$. If we were in the first case then $\phi_0$ would be $\alpha\circ N_{L_0/F}$-orthogonal hence $\phi$ in turn $\alpha$-orthogonal by Lemma \ref{lemma ref}, which is not possible as $\phi$ is irreducible. Hence $\chi'_{|L_0^*}=\alpha\circ N_{L_0/F}.\eta_{L/L_0}$.
\end{proof}

\begin{rem}
In fact Lemma \ref{lemma determinant} allows another criterion to discriminate between $\alpha$-symplectic and $\alpha$-orthogonal representations in Lemma \ref{lemma selfdual tame reps}, namely $\phi$ is $\alpha$-symplectic if and only if $\det(\phi)=\alpha^m$. Indeed if we had $\chi'_{|L_0^*}=\alpha\circ N_{L_0/F}$, retricting to $F^*$ we would get $\chi'_{|F^*}=\alpha^m$ and this would contradict Lemma \ref{lemma determinant} because according to \cite[29.2 Proposition]{BH.06} one has $det(\phi)=\eta_{K/F}\chi'_{|F^*}$ for $K/F$ the quadratic unramified extension of $F$ and $\eta_{K/F}$ its corresponding quadratic character.
\end{rem}

The Langlands parameter of the representation $\pi(\chi)$is given by \cite[Theorem 2]{BH.11}: 
it is \[\phi(\pi(\chi)):=\Ind_{W_L}^{W_F}(\eta\chi)\] where $\eta$ is the unramified quadratic character of $L^*$. Hence Theorem \ref{theorem dichotomy for tame reps} has the following immediate corollary.

\begin{cor}\label{corollary mu-symplecticity}
For $n\geq 4$ (i.e. $m\geq 2$), the representation $\pi(\chi)$ is $\mu_{|F^*}$-symplectic if and only if $\chi_{|L_0^*}=\mu_{|F^*} \circ N_{L_0/F}$.
\end{cor}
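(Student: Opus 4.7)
The plan is to apply Theorem \ref{theorem dichotomy for tame reps} directly to the Langlands parameter $\phi(\pi(\chi)) = \Ind_{W_L}^{W_F}(\eta\chi)$, taking $\alpha = \mu_{|F^*}$ and playing the role of $\chi'$ with $\eta\chi$. Since $L/F$ is unramified of degree $n = 2m$, the induced representation is indeed a tame $n$-dimensional irreducible representation of $W_F$, so the theorem applies.

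The one thing I would actually need to compute is the restriction $(\eta\chi)_{|L_0^*}$, which amounts to understanding $\eta_{|L_0^*}$. Here $L/L_0$ is unramified of degree $2$, and $\eta$ is the unramified quadratic character $x \mapsto (-1)^{v_L(x)}$ of $L^*$. Because $L/L_0$ is unramified, the valuation $v_L$ restricts to $v_{L_0}$ on $L_0^*$, so $\eta_{|L_0^*}$ agrees with the unramified quadratic character $\eta_{L/L_0}$ of $L_0^*$.

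Plugging into Theorem \ref{theorem dichotomy for tame reps}, the representation $\pi(\chi)$ is $\mu_{|F^*}$-symplectic if and only if
\[(\eta\chi)_{|L_0^*} = (\mu_{|F^*} \circ N_{L_0/F}) \cdot \eta_{L/L_0}.\]
Using $\eta_{|L_0^*} = \eta_{L/L_0}$, the quadratic factors cancel and this reduces to $\chi_{|L_0^*} = \mu_{|F^*} \circ N_{L_0/F}$, exactly the condition in the statement.

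There is no real obstacle here; the corollary is essentially a bookkeeping consequence of Theorem \ref{theorem dichotomy for tame reps}, and the only small verification is the identification $\eta_{|L_0^*} = \eta_{L/L_0}$, which is immediate from the unramifiedness of $L/L_0$.
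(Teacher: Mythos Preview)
Your proof is correct and matches the paper's approach exactly: the paper states that the corollary is an immediate consequence of Theorem \ref{theorem dichotomy for tame reps} applied to $\phi(\pi(\chi))=\Ind_{W_L}^{W_F}(\eta\chi)$, and your write-up simply makes explicit the one-line verification $\eta_{|L_0^*}=\eta_{L/L_0}$ needed to cancel the quadratic twists.
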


\section{The Prasad and Takloo-Bighash conjecture}\label{section PTB}

We recall that the conjecture of Prasad and Takloo-Bighash has been proved by Tunnel and also Saito when $n=2$ (\cite[Theorem p.1277]{T.83} in residual characteristic not $2$, \cite[Theorem p.99]{S.93} in characteristic not $2$), \textit{hence in this Section we assume $n\geq 4$}. So comparing the statements of Theorem \ref{theorem characterization of distinction} and Corollary \ref{corollary mu-symplecticity}, it is enough to compute the Prasad and Takloo-Bighash $\epsilon$ value of a cuspidal depth-zero representation $\pi(\chi)$ with $\chi_{|L_0^*}=\mu_{|F^*} \circ N_{L_0/F}$, and to 
show that it is as expected by the conjecture when $E/F$ is unramified or $E/F$ is ramified and $\mu$ is not tame, and differs from the expected value when $E/F$ is ramified and $\mu$ is tame. 
Again in the proof we will freely confuse characters of Weil groups and of multiplicative groups of local fields (hence restrictions will be often written as composition with the norm map).\\

\noindent Let's do some preliminary computations before computing the $\epsilon$ factor of the Prasad and Takloo-Bighash conjecture. 
When $E/F$ is \textit{unramified} we have:\vspace{-1em}

\begin{eqnarray*}
& & \Ind_{W_L}^{W_F}(\eta\chi)\otimes \Ind_{W_E}^{W_F}(\mu^{-1})\\
 &=&\Ind_{W_L}^{W_F}(\eta\chi \otimes {\Ind_{W_E}^{W_F}(\mu^{-1})}_{|W_L})\\
&=&\Ind_{W_L}^{W_F}(\eta\chi(\mu^{-1} \circ N_{L/E}) \oplus \eta\chi(\mu^{-\sigma_{E/F}} \circ N_{L/E}))\\
&&\text{by Mackey's restriction formula with }<\sigma_{E/F}>=\mathrm{Gal}_F(E)\\
&=&\Ind_{W_L}^{W_F}(\eta\chi(\mu^{-1} \circ N_{L/E})) \oplus \Ind_{W_L}^{W_F}(\eta\chi(\mu^{-\sigma_{E/F}} \circ N_{L/E})).
\end{eqnarray*}

When $E/F$ is \textit{ramified} we have:\vspace{-1em} 

\begin{eqnarray*}
&&\Ind_{W_L}^{W_F}(\eta\chi)\otimes \Ind_{W_E}^{W_F}(\mu^{-1})\\
 &=&\Ind_{W_L}^{W_F}(\eta\chi\otimes \Ind_{W_E}^{W_F}(\mu^{-1})_{|W_L})\\
 &=&\Ind_{W_E}^{W_F}(\eta\chi\otimes \Ind_{W_M}^{W_L}(\mu^{-1}\circ N_{M/E}))\\
 &&\text{by Mackey's restriction formula with }M=<L,E>\\
 &=&\Ind_{W_M}^{W_F}((\eta\chi)\circ N_{M/L}.\mu^{-1}\circ N_{M/E}).
\end{eqnarray*}

\begin{thm}\label{theorem PTB epsilon value}
 Let $\pi(\chi)$ be a depth-zero cuspidal representation of $\GL_n(F)$, such that $\chi_{|L_0^*}=\mu_{|F^*} \circ N_{L_0/F}$. Let $\psi$ be a non-trivial additive character of $F$.
 \begin{itemize}[label=\textbullet]
  \item If $E/F$ is unramified, then $\epsilon(\frac{1}{2},\pi(\chi)\otimes \Ind_{W_E}^{W_F}(1),\psi)=\omega_{E/F}(-1)^m\mu(-1)^m$. 
  \item If $E/F$ is ramified:
  \begin{itemize}
   \item If $\mu$ is tame then $\epsilon(\frac{1}{2},\pi(\chi)\otimes \Ind_{W_E}^{W_F}(1),\psi)=-\omega_{E/F}(-1)^m\mu(-1)^m$. 
   \item If $\mu$ is not tame then $\epsilon(\frac{1}{2},\pi(\chi)\otimes \Ind_{W_E}^{W_F}(1),\psi)=\omega_{E/F}(-1)^m\mu(-1)^m$. 
 \end{itemize}
 \end{itemize}
\end{thm}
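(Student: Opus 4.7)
The plan is to start directly from the preliminary decompositions of $\pi(\chi)\otimes\Ind_{W_E}^{W_F}(\mu^{-1})$ displayed just above the theorem statement. In the unramified case this is a direct sum of two terms $\Ind_{W_L}^{W_F}(\rho_i)$ with $\rho_1,\rho_2$ explicit one-dimensional characters of $W_L$; in the ramified case it is a single induction from $W_M$ of an explicit character $\rho_M$. By additivity of $\epsilon$ (property \ref{equation additivity of epsilon}) it suffices to treat each piece separately. I would then reduce, via repeated use of inductivity of $\epsilon$ (property \ref{equation inductivity}) and multiplicativity of the Langlands constants (property \ref{equation multiplicativité de la constante de Langlands}), to a situation where Fröhlich--Queyrut (property \ref{equation FQ}) applies on a well-chosen quadratic subextension.

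Concretely, in the unramified case I would factor $\Ind_{W_L}^{W_F}=\Ind_{W_{L_0}}^{W_F}\circ\Ind_{W_L}^{W_{L_0}}$, so that the final induction is over $L/L_0$ which is quadratic unramified, and verify that each $\rho_i|_{L_0^*}$ is trivial up to an easily-handled unramified tame twist; this triviality is precisely what the hypothesis $\chi|_{L_0^*}=\mu_{|F^*}\circ N_{L_0/F}$ is designed to ensure, after taking into account $\eta|_{L_0^*}=\eta_{L/L_0}$ and the restriction of $\mu^{-1}\circ N_{L/E}$ to $L_0^*$. In the ramified case I would similarly factor $\Ind_{W_M}^{W_F}=\Ind_{W_L}^{W_F}\circ\Ind_{W_M}^{W_L}$, where $M/L$ is quadratic ramified, and check that $\rho_M|_{L^*}$ is trivial, which reduces after a short calculation to $\chi^2|_{L^*}=\mu\circ N_{L/F}|_{L^*}$; this in turn follows from the hypothesis together with tameness of $\chi$ and $\mu_{|F^*}$ and the fact that $L/F$ is unramified.

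After the Fröhlich--Queyrut step the surviving $\epsilon$-factor equals an explicit character value $\rho_i(\delta)$, resp. $\rho_M(\delta)$, evaluated at the distinguished generator $\delta$ of $E/F$ fixed in the preliminaries, which simplifies using the central character identity $\chi_{|F^*}=\mu_{|F^*}^m$ to yield the $\mu(-1)^m$ factor in the final answer. The Langlands $\lambda$-constants accumulated along the induction tower are evaluated via property \ref{equation constante de Langlands NR} at each unramified step and via property \ref{equation square=-1} (which gives $\lambda^2=\omega(-1)$) at the single ramified step, producing the $\omega_{E/F}(-1)^m$ factor in the ramified case and contributing trivially in the unramified case.

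I expect the main obstacle to be the extra $-1$ in the tame-ramified subcase. This sign should trace back to the way the conductor of $\mu$ interacts with the Langlands constant $\lambda(M/L,\psi_L)$ (via property \ref{equation torsion NR}) and with the Fröhlich--Queyrut evaluation: when $\mu$ is tame, the value at $\delta=\varpi_E$ sees only the tame component, producing a shifted sign that vanishes when $\mu$ has conductor $\geq 2$. Careful bookkeeping of the conductor of $\psi_L$ — adjusting $\psi$ at the outset via property \ref{equation translation caractère additif} to normalize — together with a clean case split on the tameness of $\mu$ should isolate this extra $-1$. A secondary but lower-stakes issue will be parity-of-$m$ bookkeeping in the unramified case, since $L_0$ may or may not contain $E$; either way the final answer is independent of this, but the intermediate tower structure differs.
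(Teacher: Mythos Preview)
Your overall architecture is right, but the two crucial ``Fr\"ohlich--Queyrut applies here'' claims are both false as stated, and this is where the argument breaks down.

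\medskip

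\textbf{Ramified case.} You propose to use $M/L$ and to show that $\rho_M|_{L^*}=1$, claiming this reduces to $\chi^2=\mu_{|F^*}\circ N_{L/F}$ on $L^*$. But the hypothesis $\chi_{|L_0^*}=\mu_{|F^*}\circ N_{L_0/F}$ only gives, for $x\in L^*$, that $\mu_{|F^*}(N_{L/F}(x))=\chi(x\,\sigma_{L/L_0}(x))=\chi(x)\chi^{\sigma_{L/L_0}}(x)$; so $\chi^2=\mu_{|F^*}\circ N_{L/F}$ would force $\chi=\chi^{\sigma_{L/L_0}}$, contradicting regularity of $\chi$ for $n\geq 4$. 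Hence $\rho_M|_{L^*}\neq 1$ and property~\ref{equation FQ} cannot be applied over $M/L$. The paper instead exploits the biquadratic extension $M/L_0$: there is a \emph{third} intermediate quadratic extension $L_2$ (besides $L$ and $L_1=\langle E,L_0\rangle$), and one checks that $\rho_M|_{L_2^*}=1$ (after stripping $\eta$ via property~\ref{equation torsion NR}) precisely because $N_{M/L}|_{L_2^*}=N_{L_2/L_0}$ lands in $L_0^*$, where the hypothesis on $\chi$ bites. The Fr\"ohlich--Queyrut element is then $v\in L\setminus L_0$ with $v^2\in L_0$, and the extra sign $(-1)^{c'(\mu)}$ falls out of the unramified-twist bookkeeping in property~\ref{equation torsion NR}, not from the value at $\delta=\varpi_E$ as you suggest. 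Your factorisation through $L$ also leaves a bare $\lambda(M/L,\psi_L)$ for a ramified quadratic extension, which you cannot evaluate by property~\ref{equation square=-1} since it is not squared; the paper's factorisation through $E$ produces $\lambda(E/F,\psi)^n$ (an even power) and the unramified $\lambda(M/E,\psi_E)$, both computable.

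\medskip

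\textbf{Unramified case, $m$ even.} Here $E\subset L_0$, and for $x\in L_0^*$ one has $N_{L/E}(x)=N_{L_0/E}(x)^2$, so $\rho_1|_{L_0^*}=\eta_{L/L_0}\cdot(\mu^{\sigma_{E/F}}\mu^{-1})\circ N_{L_0/E}$, which is \emph{not} unramified in general. So Fr\"ohlich--Queyrut over $L/L_0$ fails too. The paper handles this subcase by a different mechanism: one observes that $\rho_1^{\sigma_{L/L_0}}\rho_2=1$ (this is exactly where the hypothesis is used), whence properties~\ref{equation galois invariance of epsilon}, \ref{equation translation caractère additif} and \ref{equation epsilon times epsilon dual} give $\epsilon(\tfrac12,\rho_1,\psi_L)\epsilon(\tfrac12,\rho_2,\psi_L)=1$. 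Your plan is fine when $m$ is odd (then indeed $\rho_i|_{L_0^*}=\eta_{L/L_0}$), and the paper does use Fr\"ohlich--Queyrut there; but you must treat the even-$m$ subcase separately.
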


\begin{proof}

If $L/K$ is a separable quadratic extension of non Archimedean local fields, we denote by $\sigma_{L/K}$ the associated Galois involution. We distinguish the ramified and the unramified case in our computations. 

\textbf{When $E/F$ is unramified.} We recall the situation: $E$ is included in $L$ and possibly in $L_0$ according to the 
  parity of $m$.
\vspace{-0.5cm}

\begin{figure}[H]
\centering
\begin{minipage}[t]{6cm}
\centering
\includegraphics[width=5.5cm]{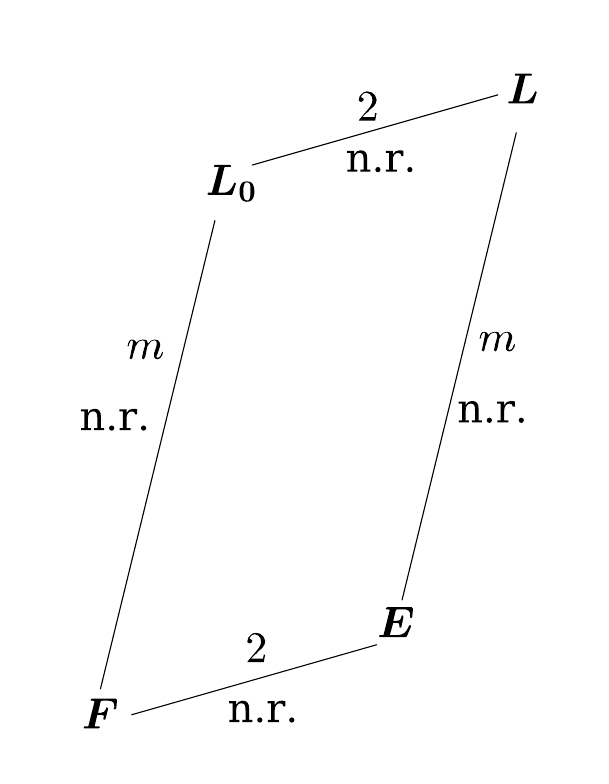}
\end{minipage}
\begin{minipage}[t]{6cm}
\centering
\includegraphics[width=4cm]{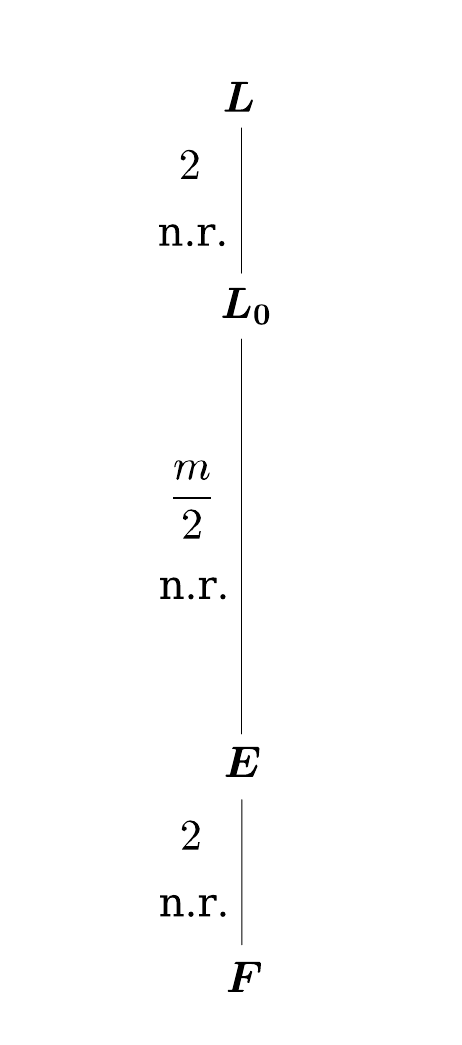}
\end{minipage}
\captionsetup{justification=centering}
\caption[]{Diagram of the extensions involved - $E/F$ unramified case \\ $m$ odd (in the left) and $m$ even (in the right)}
\end{figure}

\begin{eqnarray*}
& &\epsilon(\frac{1}{2},\Ind_{W_L}^{W_F}(\eta\chi)\otimes \Ind_{W_E}^{W_F}(\mu^{-1}),\psi)\\
&=&\epsilon(\frac{1}{2},\Ind_{W_L}^{W_F}(\eta\chi(\mu^{-1}\circ N_{L/E})),\psi)\epsilon(\Ind_{W_L}^{W_F}(\eta\chi(\mu^{-\sigma_{E/F}}\circ N_{L/E})),\psi)\text{ by §\ref{section properties of constants}, \ref{equation additivity of epsilon}.}\\
 &=&\lambda_{L/F}^{2}(\psi)\epsilon(\frac{1}{2},\eta\chi(\mu^{-1}\circ N_{L/E}),\psi_L)\epsilon(\frac{1}{2},\eta\chi(\mu^{-\sigma_{E/F}}\circ N_{L/E}),\psi_L)\text{ by §\ref{section properties of constants}, \ref{equation inductivity}.}\\
&=&\lambda_{L/E}^{2}(\psi_E)\lambda_{E/F}^{n}(\psi)\eta^2(\w_L^{d(\psi_L)})\epsilon(\frac{1}{2},\chi(\mu^{-1}\circ N_{L/E}),\psi_L)\epsilon(\frac{1}{2},\chi(\mu^{-\sigma_{E/F}}\circ N_{L/E}),\psi_L)\\
 & &\text{ by §\ref{section properties of constants}, \ref{equation multiplicativité de la constante de Langlands}.}\\
&=&\omega_{E/F}(-1)^{m}\epsilon(\frac{1}{2},\chi(\mu^{-1}\circ N_{L/E}),\psi_L)\epsilon(\frac{1}{2},\chi(\mu^{-\sigma_{E/F}}\circ N_{L/E}),\psi_L)\\
& &\text{ by §\ref{section properties of constants}, \ref{equation square=-1}. and \ref{equation constante de Langlands NR}. and because $n$ is even.}
\end{eqnarray*} 

Now we distinguish between two cases:

\begin{enumerate}
\item $m$ is even: then 
\begin{eqnarray*}
& &\epsilon(\frac{1}{2},\chi(\mu^{-1}\circ N_{L/E}),\psi_L)\epsilon(\frac{1}{2},\chi(\mu^{-\sigma_{E/F}}\circ N_{L/E}),\psi_L)\\
&=&\epsilon(\frac{1}{2},\chi^{\sigma_{L/L_0}}(\mu^{-1}\circ N_{L/E}),\psi_L)\epsilon(\frac{1}{2},\chi(\mu^{-\sigma_{E/F}}\circ N_{L/E}),\psi_L)\\
& &\text{according to §\ref{section properties of constants}, \ref{equation galois invariance of epsilon}. because $\psi_L=\psi_L^{\sigma_{L/L_0}}$}\\
& &\text{and $\mu^{-1}\circ N_{L/E}$ is also $\sigma_{L/L_0}$-invariant as $E\subset L_0\subset L$.}\\
&=&\epsilon(\frac{1}{2},\chi^{\sigma_{L/L_0}}(\mu^{-1}\circ N_{L/E}),\psi_L^{-1})\epsilon(\frac{1}{2},\chi(\mu^{-\sigma_{E/F}}\circ N_{L/E}),\psi_L)\\
& &\text{from §\ref{section properties of constants}, \ref{equation translation caractère additif} because} (\chi^{\sigma_{L/L_0}}(\mu^{-1}\circ N_{L/E}))(-1)=\\
& & (\mu_{|F^*} \circ N_{L_0/F})(-1)(\mu^{-1}\circ N_{L/E})(-1)=\mu(-1)^{m}\mu(-1)^{-m}=1
\end{eqnarray*} 
But then because \[\chi^{\sigma_{L/L_0}}(\mu^{-1}\circ N_{L/E})\chi(\mu^{-\sigma_{E/F}}\circ N_{L/E})=\chi\circ N_{L/L_0}.\mu_{|F^*}^{-1}\circ N_{L/F}=\mu_{|F^*} \circ N_{L/F}.\mu_{|F^*}^{-1}\circ N_{L/F}=1,\] 
§\ref{section properties of constants}, \ref{equation epsilon times epsilon dual}. implies that 
\[\epsilon(\frac{1}{2},\chi^{\sigma_{L/L_0}}(\mu^{-1}\circ N_{L/E}),\psi_L^{-1})\epsilon(\frac{1}{2},\chi(\mu^{-\sigma_{E/F}}\circ N_{L/E}),\psi_L)=1,\]
and we recognize the expected value $\epsilon(\frac{1}{2},\Ind_{W_L}^{W_F}(\eta\chi)\otimes \Ind_{W_E}^{W_F}(\mu^{-1}),\psi)=\omega_{E/F}^m(-1)^m\mu(-1)^m$ because $m$ is even.
\item $m$ is odd: then we notice that both 
$\chi(\mu^{-1}\circ N_{L/E})$ and $\chi(\mu^{-\sigma_{E/F}}\circ N_{L/E})$ restrict to $L_0^*$ as 
$\chi_{|L_0^*}(\mu_{|F^*} \circ N_{L_0/F})=1$. Hence by §\ref{section properties of constants}, \ref{equation FQ}, for $v\in L-L_0$ such that $v^2\in L_0$, we have 
\[\epsilon(\frac{1}{2},\chi(\mu^{-1}\circ N_{L/E}),\psi_L)=\chi(v)\mu^{-1}(N_{L/E}(v))\]
and 
\[\epsilon(\frac{1}{2},\chi(\mu^{-\sigma_{E/F}}\circ N_{L/E}),\psi_L)=\chi(v)\mu^{-\sigma_{E/F}}(N_{L/E}(v)),\] 
so that 
\[\epsilon(\frac{1}{2},\chi(\mu^{-1}\circ N_{L/E}),\psi_L)\epsilon(\frac{1}{2},\chi(\mu^{-\sigma_{E/F}}\circ N_{L/E}),\psi_L)= 
\chi(v^2)\mu^{-1}(N_{L/F}(v))\]
\[=\mu(N_{L_0/F}(v^2)N_{L/F}(v)^{-1})=\mu(N_{L_0/F}(v^2N_{L/L_0}(v)^{-1}))=\mu(N_{L_0/F}(-1))\] because $\sigma_{L/L_0}(v)=-v$, hence finally 
\[\epsilon(\frac{1}{2},\Ind_{W_L}^{W_F}(\eta\chi)\otimes \Ind_{W_E}^{W_F}(\mu^{-1}),\psi)=\omega_{E/F}(-1)^m\mu(-1)^m\] which is again the expected value.
\end{enumerate}

  \textbf{When $E/F$ is ramified.} In this case, $E$ is not included in $L$. Set $M$ to be the extension of $L$ generated by  
  $L$ and $E$, $M$ is therefore unramified $n$-dimensional on $E$. We also set $L_1=<E,L_0>$ so that $M$ is an unramified quadratic extension of $L_1$. The situation is as follows.\\
 \begin{figure}[h]
 \begin{center}
 \includegraphics[width=0.4\textwidth]{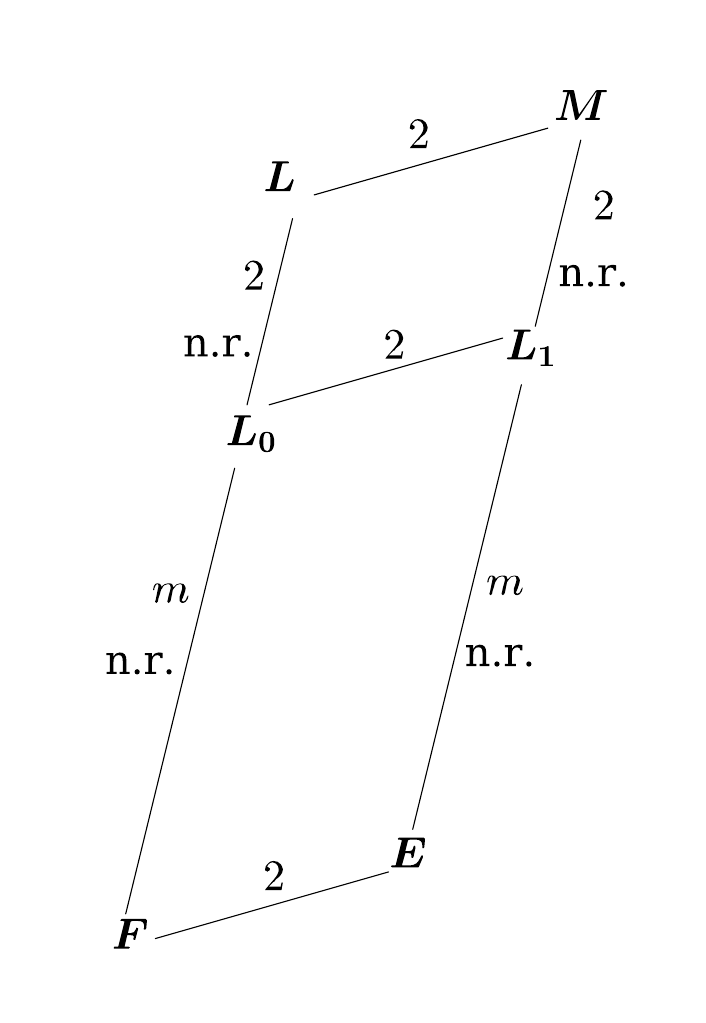}
 \caption[le titre]{Diagram of the extensions involved - $E/F$ ramified case}
 \end{center}
 \end{figure}
 \vspace{-0,2cm}

\begin{eqnarray*}
 & &\epsilon(\frac{1}{2},\Ind_{W_L}^{W_F}(\eta\chi)\otimes \Ind_{W_E}^{W_F}(\mu^{-1}),\psi)\\ 
 &=&\epsilon(\frac{1}{2},\Ind_{W_M}^{W_F}((\eta\chi)\circ N_{M/L}.\mu^{-1}\circ N_{M/E}),\psi)\\
 &=&\lambda_{M/F}(\psi)\epsilon(\frac{1}{2},(\eta\chi)\circ N_{M/L}.\mu^{-1}\circ N_{M/E},\psi_M) \text{ by §\ref{section properties of constants}, \ref{equation inductivity}}.\\
 &=&\lambda_{M/E}(\psi_E)\lambda_{E/F}^n(\psi)\epsilon(\frac{1}{2},(\eta\chi)\circ N_{M/L}.\mu^{-1}\circ N_{M/E},\psi_M) 
 \text{ by §\ref{section properties of constants}, \ref{equation multiplicativité de la constante de Langlands}.}\\
 &=&(-1)^{d(\psi_E)(n-1)}\omega_{E/F}(-1)^m\epsilon(\frac{1}{2},\omega_{M'/M}.\chi\circ N_{M/L}.\mu^{-1}\circ N_{M/E},\psi_M)\\                            & &\text{ by §\ref{section properties of constants}, \ref{equation constante de Langlands NR}. and \ref{equation square=-1}. where $M'/M$ is quadratic unramified.}\\
\end{eqnarray*}

Before proceeding further with the computation let's discuss the conductor of the character $\chi\circ N_{M/L}.\mu^{-1}\circ N_{M/E}$. 
\begin{itemize}
\item If $\mu$ is not tame then $\chi\circ N_{M/L}.\mu^{-1}\circ N_{M/E}$ clearly has the same conductor as $\mu^{-1}\circ N_{M/E}$ which is also not tame as it has the same conductor as $\mu$, by surjectivity of 
$N_{M/E}$ from $1+\P_M^d$ onto $1+\P_E^d$ for any $d\geq 1$. In partcular $\chi\circ N_{M/L}.\mu^{-1}\circ N_{M/E}$ has conductor 
$c(\mu)$ which is even as we saw in Section \ref{section distinction}.
\item If $\mu$ is tame let us show that the character $\chi\circ N_{M/L}.\mu^{-1}\circ N_{M/E}$ has conductor $1$. Clearly it is trivial on $1+\P_M$ because $\chi$ and $\mu$ are tame, but if it was unramified, going backwards one would deduce that $\Ind_{W_L}^{W_F}(\chi)\otimes \Ind_{W_E}^{W_F}(\mu^{-1})$ would be unramified, hence a direct sum of unramified characters. But $\Ind_{W_L}^{W_F}(\chi)\otimes \Ind_{W_E}^{W_F}(\mu^{-1})$ cannot contain any character, otherwise by irreducibility of $\Ind_{W_L}^{W_F}(\chi)$, it would appear as sub-representation of a character twist of $\Ind_{W_E}^{W_F}(\mu)$, 
which is impossible for dimension reasons (remember that we suppose $n\geq 3$). Hence $\chi\circ N_{M/L}.\mu^{-1}\circ N_{M/E}$ has conductor $1$. 
\end{itemize}
Hence setting $c'(\mu)=c(\mu)$ when $c(\mu)\geq 1$ and $c'(\mu)=1$ when $\mu$ is unramified, we obtain $c(\chi\circ N_{M/L}.\mu^{-1}\circ N_{M/E})=c'(\mu)$, which is even as soon as $c'(\mu)>1$. Finally we obtain: 
\begin{eqnarray*}
 & &\epsilon(\frac{1}{2},\Ind_{W_L}^{W_F}(\eta\chi)\otimes \Ind_{W_E}^{W_F}(\mu^{-1}),\psi)\\
 &=&(-1)^{d(\psi_M)(n-1)}\omega_{E/F}(-1)^m\omega_{M'/M}(\varpi_M^{d(\psi_M)+c'(\mu)})\epsilon(\frac{1}{2},\chi\circ N_{M/L}.\mu^{-1}\circ N_{M/E},\psi_M)\\
 & &\text{ thanks to §\ref{section properties of constants}, \ref{equation torsion NR}}\\
&=&(-1)^{d(\psi_M)(n-1)}\omega_{E/F}(-1)^m(-1)^{d(\psi_M)+c'(\mu)}\epsilon(\frac{1}{2},\chi\circ N_{M/L}.\mu^{-1}\circ N_{M/E},\psi_M)\\
&=&(-1)^{c'(\mu)}\omega_{E/F}(-1)^m\epsilon(\frac{1}{2},\chi\circ N_{M/L}.\mu^{-1}\circ N_{M/E},\psi_M)\\
& &\text{because $n$ is even.}
\end{eqnarray*}

Note that $M/L_0$ is bi-quadratic, so there is one more quadratic extension $L_2$ of $L_0$ under $M$. 
Now the restriction of $\chi \circ N_{M/L}$ to $L_2^*$ is equal to $\chi\circ N_{L_2/L_0}=\mu_{|F^*} \circ N_{L_2/F}$, 
 whereas that of 
$\mu^{-1}\circ N_{M/E}$ is equal to $\mu^{-1}\circ N_{L_2/F}$, hence 
$\chi\circ N_{M/L}.\mu^{-1}\circ N_{M/E}$ restricts trivially to $L_2^*$.\\

Take $v\in L\setminus L_0$ with $v^2\in L_0$. Then $M=L_2[v]$ and we can apply §\ref{section properties of constants}, \ref{equation FQ}. : 
\begin{eqnarray*}
&&\epsilon(\frac{1}{2},\chi\circ N_{M/L}.\mu^{-1}\circ N_{M/E},\psi_M)\\
 &=& \chi\circ N_{M/L}(v).\mu^{-1}\circ N_{M/E}(v)\\
 &=& \chi(v^2)\mu^{-1}\circ N_{L_1/E}(-v^2) \\
 &=&\chi(v^2)\mu_{|F^*}^{-1}\circ N_{L_0/F}(-v^2)\\
 &=&\mu_{|F^*} \circ N_{L_0/F}(v^2)\mu_{|F^*}^{-1}\circ N_{L_0/F}(-v^2)\\
 &=&\mu(-1)^m
\end{eqnarray*}

Thus $\epsilon(\frac{1}{2},\Ind_{W_L}^{W_F}(\eta\chi)\otimes \Ind_{W_E}^{W_F}(1),\psi)=(-1)^{c'(\mu)}\omega_{E/F}^m(-1)\mu(-1)^m$, as expected.

\end{proof}

As a corollary, we obtain:

\begin{cor}\label{corollary PTB}
 Let $\pi(\chi)$ be a depth $0$ cuspidal representation of $\GL_n(F)$,
 let $\mu$ be a character of $E^*$, then $\pi(\chi)$ is $\mu\circ \mathrm{det}_{\GL_m(E)}$-distinguished by 
 $H=\GL_m(E)$ if and only if
 \begin{enumerate}
  \item $\pi(\chi)$ is $\mu_{|F^*}$-symplectic;
  \item $\epsilon(\frac{1}{2},\Ind_{W_L}^{W_F}(\eta\chi)\otimes \Ind_{W_E}^{W_F}(\mu^{-1}))=\omega_{E/F}(-1)^m\mu(-1)^m$.
 \end{enumerate}

\end{cor}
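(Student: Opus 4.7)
The plan is to deduce the corollary by combining the three main results already established in the paper: Theorem \ref{theorem characterization of distinction}, which characterizes $\mu$-distinction of $\pi(\chi)$ in terms of the restriction $\chi_{|L_0^*}$; Corollary \ref{corollary mu-symplecticity}, which gives exactly the same criterion for $\mu_{|F^*}$-symplecticity; and Theorem \ref{theorem PTB epsilon value}, which computes the Prasad--Takloo-Bighash epsilon factor under that common condition. The crucial observation is that distinction, symplecticity, and the "correct" epsilon value are all governed by the single condition $\chi_{|L_0^*} = \mu_{|F^*} \circ N_{L_0/F}$, with the sole subtle discrepancy occurring in the ramified--tame regime, where distinction fails while symplecticity still holds, and where the epsilon value differs from the expected one by a sign.

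For the forward direction, I would assume $\pi(\chi)$ is $\mu$-distinguished. Theorem \ref{theorem characterization of distinction} then simultaneously yields the restriction identity $\chi_{|L_0^*} = \mu_{|F^*} \circ N_{L_0/F}$ and excludes the case where $E/F$ is ramified and $\mu$ is tame. Condition (1) is then immediate from Corollary \ref{corollary mu-symplecticity}, and condition (2) follows from Theorem \ref{theorem PTB epsilon value} applied to the two remaining subcases ($E/F$ unramified, or $E/F$ ramified with $\mu$ not tame), both of which yield precisely the value $\omega_{E/F}(-1)^m \mu(-1)^m$.

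For the converse, assume conditions (1) and (2) hold. Corollary \ref{corollary mu-symplecticity} converts (1) into $\chi_{|L_0^*} = \mu_{|F^*} \circ N_{L_0/F}$. In order to apply Theorem \ref{theorem characterization of distinction}, I need to rule out the ramified--tame case. This is exactly what condition (2) is for: were $E/F$ ramified and $\mu$ tame, Theorem \ref{theorem PTB epsilon value} would force the epsilon factor to equal $-\omega_{E/F}(-1)^m \mu(-1)^m$, contradicting (2). Hence this bad case does not occur, and Theorem \ref{theorem characterization of distinction} delivers the desired distinction.

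The argument is essentially mechanical once the three preceding results are available, so there is no genuine obstacle here. All the substantial content sits in those results themselves, and in particular in the sign bookkeeping of Theorem \ref{theorem PTB epsilon value}, which was deliberately arranged so that the unique case where the epsilon factor differs from the conjectural value matches precisely the unique case in which distinction fails despite symplecticity holding.
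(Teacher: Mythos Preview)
Your proposal is correct and follows exactly the approach of the paper: the corollary is obtained by combining Theorem \ref{theorem characterization of distinction}, Corollary \ref{corollary mu-symplecticity}, and Theorem \ref{theorem PTB epsilon value}, with the ramified--tame case being the one where symplecticity holds but distinction fails, matched precisely by the sign discrepancy in the epsilon factor. The paper itself states this logic at the start of Section \ref{section PTB} and then simply writes ``As a corollary, we obtain'' after Theorem \ref{theorem PTB epsilon value}; your argument spells out both implications explicitly, which is a welcome clarification but not a different route.
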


\bibliographystyle{alpha}
\bibliography{ptb}
\end{document}